\newtheorem{thm}{Theorem}
\newtheorem{lem}{Lemma}
\newtheorem{cor}{Corollary}
\theoremstyle{remark}
\theoremstyle{definition}
\def\Real{\mathop{\rm Re}}
\def\Imag{\mathop{\rm Im}}
\def\tr{\mathop{\rm Tr}}
\def\Arcosh{\mathop{\rm Arcosh}}
\title{Eigenvalue Bounds for Perturbed Periodic Dirac Operators}
\author{Ghada Shuker Jameel$^{1,2}$, Karl Michael Schmidt$^1$
\thanks{corresponding author}
\\
\\ {\it $^1$School of Mathematics, Cardiff University, Wales, UK\/}
\\ {\it $^2$Department of Mathematics, College of Education\/}
\\ {\it for Pure Science, University of Mosul, Iraq\/}
}
\date{}
\begin{document}

\maketitle
\begin{abstract}
We characterise regions in the complex plane that contain all non-embedded
eigenvalues of a perturbed periodic Dirac operator on the real line with
real-valued periodic potential and a generally non-symmetric matrix-valued
perturbation $V$. We
show that the eigenvalues are located close to the end-points of the
spectral bands for small $V\in L^1(\mathbb R)^{2\times 2}$, but only close
to the spectral bands as a whole for small $V\in L^p(\mathbb R)^{2\times 2}$, $p > 1$.
As auxiliary results, we prove the relative compactness of matrix
multiplication operators in $L^{2p}(\mathbb R)^{2\times 2}$ with respect to
the periodic operator under minimal hypotheses, and find the asymptotic solution
of the Dirac equation on a finite interval for spectral parameters with large
imaginary part.
\\
{\bf Keywords:}
Non-selfadjoint operator; periodic Dirac system; eigenvalue enclosure
\\
{\bf 2020 MSC:} 47B28; 34L40, 47A55, 81Q15
\end{abstract}

\section{Introduction}
In the present paper, we consider the one-dimensional perturbed periodic Dirac
operator
\begin{equation*}
H = -i\,\sigma_2\,\frac d{dx} + m\,\sigma_3 + q(x) + V(x) \qquad (x\in\mathbb R),
\end{equation*}
where 
$\sigma_2$
and
$\sigma_3$
are Pauli matrices (see equation (\ref{eq:Pauli}) below), $m \ge 0$ is the particle mass,
$q : \mathbb R \rightarrow \mathbb R$ is a periodic potential and
$V : \mathbb R \rightarrow \mathbb C^{2\times 2}$ is a matrix-valued perturbation. Although the unperturbed periodic operator
\begin{equation*}
H_0 = -i\,\sigma_2\,\frac d{dx} + m\,\sigma_3 + q(x)\qquad (x\in\mathbb R),
\end{equation*}
is a self-adjoint operator in $L^2(\mathbb R)^2$, the operator $H$ is not
self-adjoint in general as we do not assume that the matrix multiplication
operator $V$ is symmetric.
We assume that $V$ is bounded and that $V\in L^p(\mathbb R)^{2\times 2}$ for
some $p \ge 1$. Then $H$ has the same essential spectrum as $H_0$, consisting of closed intervals on the real line (spectral bands), generally separated by spectral gaps, but may in addition have
discrete eigenvalues in the complex plane
(see Theorem \ref{thm:compres} below).

Our aim is to find a priori enclosures for these eigenvalues, i.e. regions
characterised in terms of the properties of the unperturbed periodic Dirac
equation and the $p$-norm of $V$ which contain all (non-embedded) eigenvalues
of $H$.
In the absence of a periodic background potential, $q = 0$, \cite{CLT} proved
that, for $V\in L^1(\mathbb R)^{2\times 2}$ with $\|V\|_1 < 1$, the non-embedded eigenvalues of $H$ lie within circles around (but not
centred at) the points $\pm m$, the end-points of the two intervals of
essential spectrum $\sigma_e(H) = (-\infty, -m] \cup [m, \infty)$.
The radii of the circles tend to 0 as the 1-norm of $V$ tends to 0, showing that
when a coupling parameter $\epsilon$ is employed, the eigenvalues of
$H_0 + \epsilon V$ emanate from the points $\pm m$ only as $\epsilon$ increases
from 0.

In the present study, we extend this observation to the case where a periodic
background potential $q$ is present and allow $V$ to be $p$-integrable with
$p \ge 1$.
Our main eigenvalue exclusion result (Theorem \ref{thm:eet}) states that a
complex number $\lambda$ outside the essential spectrum of $H$ cannot be an
eigenvalue of $H$ if
the $p$-norm of $V$ (defined in equation (\ref{eq:Vnorm}) below) satisfies
the inequality
$\|V\|_p < F_p(\lambda)$, where $F_p$ is some non-negative function determined completely
in terms of solution properties of the unperturbed periodic equation.
From our results, the following picture emerges.
For $p = 1$, $F_1$ is bounded above by 1 and in fact tends to 1 as
$|\Imag\lambda| \rightarrow \infty$ (see Theorems \ref{thm:Masymp}, \ref{thm:phipmasymp}), so its level sets for levels $< 1$ lie in neighbourhoods of the
real line.
Moreover, $F_1$ tends to zero exactly at the
end-points of spectral bands (Theorems \ref{thm:gammabound}, \ref{thm:MisI}, \ref{thm:MisnotI}).
This means that for small $\|V\|_1 < 1$, the eigenvalues are confined to small neighbourhoods of the end-points of spectral bands and, when a coupling parameter is
applied, will emerge from these end-points only.
This behaviour appears to be a natural analogue to that observed in \cite{CLT} and \cite{CS}.

However, for $p > 1$, $F_p(\lambda)$ grows beyond all bounds as $|\Imag\lambda|\rightarrow\infty$.
Therefore the level sets of $F_p$ will be in neighbourhoods of the real
line for all positive levels, and we get eigenvalue enclosure regions for
any size of $\|V\|_p$.
However, $F_p$ tends to 0 at all points of the essential spectrum of $H$,
which means that for small $\|V\|_p$ the eigenvalues are confined to small
neighbourhoods of the whole spectral bands.
Although we do not show the actual appearance of eigenvalues in such position
here, this opens up the possibility of eigenvalues approaching (or, with a
coupling parameter, emerging from) any point of the essential spectrum of $H$,
similar to the behaviour observed in \cite{Boegli} for Schr\"odinger operators.

We mention that in the recent study \cite{BKN}, a detailed spectral analysis of
the different, but related Dirac operator where, instead of a real periodic
potential, $q$ is a purely imaginary jump potential was performed.

The present paper is structured as follows.
In Section \ref{sec:periodic} we summarise the relevant results from Floquet
theory of the periodic Dirac equation, describing in particular the definition
of the complex quasimomentum used in this paper.
We also give a formula for the resolvent operator of $H_0$ and show that it
is a bounded linear operator not only in $L^2(\mathbb R)^2$, but also between a
dual pair of non-Hilbert Lebesgue spaces (Theorem \ref{thm:resolvent}).
In Section \ref{sec:exclusion} we first prove that $H$ has the same essential
spectrum (for all five usual definitions for a non-selfadjoint operator) as
$H_0$ and only discrete eigenvalues besides (Theorem \ref{thm:compres}).
A key part of the proof is the observation that the operator of multiplication
with a matrix-valued function in $L^{2p}(\mathbb R)$ is $H_0$-relatively
compact (Lemma \ref{lem:compres}), for which we provide a proof as it is not easily
found in the literature in this generality, with locally integrable $q$, and
hence may be of independent interest.
We then proceed to the main eigenvalue exclusion theorem (Theorem \ref{thm:eet})
already described above.
In Section \ref{sec:near}, we show that the function determining the
exclusion criterion for $p = 1$ tends to zero exactly at the end-points of the spectral
bands.
Finally, in Section \ref{sec:far}, we show that this function tends to 1 as
$\Imag\lambda \rightarrow \infty$. This result is based on the general
asymptotics of the fundamental system of the Dirac equation on a finite
interval for this limit (Theorem \ref{thm:Philim}), which is here obtained
using a novel transformation of the Dirac equation into the pair of coupled
differential equation systems (\ref{eq:sepsys}) and may be of interest in its
own right.

As a matter of notation, we write $|w|$ for the Euclidean norm $\sqrt{|w_1|^2 + |w_2|^2}$ of vectors $w\in\mathbb C^2$.

\section{The periodic equation}\label{sec:periodic}

\noindent
Let $\Phi(\cdot, \lambda)$ be the canonical fundamental system of the periodic Dirac
equation with spectral parameter $\lambda\in\mathbb C$, i.e.\ the solution
of the (matrix) initial value problem
\begin{equation}
 -i\sigma_2 \Phi'(x, \lambda) + (m \sigma_3 + q(x))\,\Phi(x,\lambda) = \lambda\Phi(x, \lambda) \quad (x \in \mathbb R),
 \quad \Phi(0, \lambda) = \mathbb I,
\label{eq:pD}
\end{equation}
where $\mathbb I$ is the $2\times 2$ unit matrix
and $q$ is a locally integrable, real-valued, periodic function.
The qualitative behaviour of the
solutions can be studied by means of Floquet theory considering the monodromy
matrix
$M(\lambda) := \Phi(a,\lambda)$ $(\lambda\in\mathbb C)$,
where $a > 0$ is the period of $q$, see \cite{BES}. As the (Wronskian)
determinant of the monodromy matrix is equal to 1, its eigenvalues are
inverses of each other. Their positions in the complex plane can be characterised
in terms of the discriminant
$\mathfrak D(\lambda) := \tr M(\lambda)$.
The characteristic equation for $M(\lambda)$,
\begin{equation*}
\mu^2 - \mathfrak D(\lambda)\,\mu + 1 = 0,
\end{equation*}
shows that $M(\lambda)$ has two distinct eigenvalues if and only if
$\mathfrak D(\lambda) \notin \{-2, 2\}$.
In this case, either the eigenvalues lie on the unit circle and are complex
conjugates of each other (this happens when $\mathfrak D(\lambda) \in (-2, 2)$), or one
eigenvalue, $\rho(\lambda)$, lies outside, the other eigenvalue, $1/\rho(\lambda)$, lies inside the unit circle (this happens when
$\mathfrak D(\lambda) \in \mathbb C \setminus [-2, 2]$).
If $\mathfrak D(\lambda) \in \{-2, 2\}$, then either the geometric multiplicity
of the eigenvalue $\pm 1$ is 1 or $M(\lambda) = \pm \mathbb I$
(see \cite[Section 1.4]{BES}).

If $\mu$ is an eigenvalue of $M(\lambda)$ and $v \in \mathbb C^2\setminus \{0\}$
is a corresponding eigenvector, then
$u(x) := \Phi(x,\lambda)\,v$ $(x \in \mathbb R)$
is a {\it Floquet solution\/}
of the Dirac equation
\begin{equation}
-i\sigma_2 u'(x) + (m\sigma_3 + q(x))\,u(x) = \lambda\,u(x) \qquad (x \in\mathbb R);
\label{eq:Dirac}
\end{equation}
clearly $u(0) = v$. Then the function
$\varphi(x) := \mu^{-x/a}\,u(x)$ $(x\in\mathbb R)$ is $a$-periodic.
This shows that all solutions of the periodic Dirac equation are bounded
if $\mathfrak D(\lambda) \in (-2, 2)$ and that there is one Floquet solution
$u_+(\cdot,\lambda)$
exponentially small at $-\infty$ and one Floquet solution $u_-(\cdot,\lambda)$ exponentially small at
$\infty$ if $\mathfrak D(\lambda) \in \mathbb C \setminus [-2, 2]$. If
$\mathfrak D(\lambda) \in \{-2, 2\}$, then either one or all solutions are
bounded.
Hence we can deduce that $\sigma(H_0) = \{\lambda \in \mathbb C \mid \mathfrak D(\lambda) \in [-2, 2]\} \subset \mathbb R$ for the self-adjoint operator $H_0 = -i\sigma_2\frac d{dx} + m \sigma_3 + q$ (see also \cite[Theorem 4.7.1]{BES}).

The (entries of the) monodromy matrix $M$ and hence also the discriminant
$\mathfrak D$ are entire functions, cf. \cite[Theorem 1.7.2]{Eastham70}.
Since $m > 0$ and $q$ is real valued, it follows that
$\Phi(x, \overline\lambda) = \overline{\Phi(x, \lambda)}$ $(x \in \mathbb R)$ and so $M(\overline\lambda) = \overline{M(\lambda)}$ and $\mathfrak D(\overline\lambda) = \overline{\mathfrak D(\lambda)}$ for all $\lambda\in\mathbb C$.
If $\mathfrak D(\lambda) \notin [-2,2]$, let $v_+(\lambda)$ and $v_-(\lambda)$
be eigenvectors corresponding to the eigenvalues $\rho(\lambda)$ and
$1/\rho(\lambda)$ of $M(\lambda)$, respectively.
Then $\rho(\overline\lambda) = \overline{\rho(\lambda)}$ and we can
choose the eigenvectors such that $v_\pm(\overline\lambda) = \overline{v_\pm(\lambda)}$.
Therefore we focus on $\lambda$ with $\Imag\lambda \ge 0$ in the following.

The discriminant can be written in the form
\begin{align}
 \mathfrak D&(\lambda) = 2 \cos k(\lambda) a
\nonumber
\\
 &= 2 \cosh (a \Imag k(\lambda))\cos (a \Real k(\lambda)) - 2 i \sinh (a \Imag k(\lambda))\sin (a \Real k(\lambda))
\label{eq:Dk}
\end{align}
$(\lambda\in\mathbb C, \Imag \lambda \ge 0)$,
where the (continuous) function $k$
with $\Imag k(\lambda) \ge 0$ is called the complex {\it quasimomentum\/}
(see also e.g.\ \cite{KK97}).
As can be seen from equation (\ref{eq:Dk}), for $\lambda\in\mathbb R$, the quasimomentum $k(\lambda)$ is real;
it is closely
related to the rotation number (cf.\ \cite[p.43]{BES}) in the intervals where
$\mathfrak D(\lambda) \in [-2, 2]$ (stability intervals), whereas it has
constant real part $\in\pi\mathbb Z$ and positive imaginary part in the
intervals where $\mathfrak D(\lambda) \notin [-2, 2]$ (instability intervals).
More generally, for $\lambda \in\mathbb C$ such that $\Imag\lambda \ge 0$ and $\mathfrak D(\lambda) \notin [-2, 2]$, the eigenvalue of $M(\lambda)$ that lies outside
the unit circle is $\rho(\lambda) = e^{-i k(\lambda) a}$,
the other eigenvalue being $1/\rho(\lambda) = e^{i k(\lambda) a}$.
Clearly $k(\lambda) \in \mathbb R$ implies that $\mathfrak D(\lambda)\in [-2,2]$ and so $\lambda\in\mathbb R$.
We also note the following. 
\begin{lem}
Let $\lambda\in\mathbb C$, $\Imag\lambda\ge 0$. Then
\begin{equation}
\Imag k(\lambda) = \frac 1{2a} \Arcosh\left(\frac{|\mathfrak D(\lambda)|^2}4 + \sqrt{\left(1 - \frac{|\mathfrak D(\lambda)|^2}4\right)^2 + (\Imag \mathfrak D(\lambda))^2}\right).
\label{eq:Imk}
\end{equation}
In particular,
\begin{equation}
\lim\limits_{\lambda\rightarrow\lambda_0} \Imag k(\lambda) = 0
\label{eq:Imklim}
\end{equation}
if $\mathfrak D(\lambda_0) \in [-2, 2]$.
\end{lem}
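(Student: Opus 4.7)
The plan is to read off the formula by splitting $k(\lambda)=k_1+ik_2$ into real and imaginary parts with $k_2=\Imag k(\lambda)\ge 0$, substituting into (\ref{eq:Dk}), and verifying that the expression under the $\Arcosh$ is $\cosh(2ak_2)$ by a short algebraic identity.

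First I would set $k_1:=\Real k(\lambda)$, $k_2:=\Imag k(\lambda)\ge 0$. From equation (\ref{eq:Dk}),
$\Real \mathfrak D(\lambda)=2\cosh(ak_2)\cos(ak_1)$ and $\Imag \mathfrak D(\lambda)=-2\sinh(ak_2)\sin(ak_1)$, so
$|\mathfrak D(\lambda)|^2/4=\cosh^2(ak_2)\cos^2(ak_1)+\sinh^2(ak_2)\sin^2(ak_1)$, which after using $\cos^2+\sin^2=1$ and $\cosh^2-\sinh^2=1$ collapses to $\cosh^2(ak_2)-\sin^2(ak_1)$. Consequently $1-|\mathfrak D(\lambda)|^2/4=\sin^2(ak_1)-\sinh^2(ak_2)$, while $(\Imag \mathfrak D(\lambda))^2=4\sin^2(ak_1)\sinh^2(ak_2)$. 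The key algebraic observation, which is the only substantive step, is that
\[
 (\sin^2(ak_1)-\sinh^2(ak_2))^2+4\sin^2(ak_1)\sinh^2(ak_2)=(\sin^2(ak_1)+\sinh^2(ak_2))^2,
\]
so the square root in (\ref{eq:Imk}) equals $\sin^2(ak_1)+\sinh^2(ak_2)$ (both summands non-negative). Adding $|\mathfrak D(\lambda)|^2/4$ gives $\cosh^2(ak_2)+\sinh^2(ak_2)=\cosh(2ak_2)$, and applying the principal branch of $\Arcosh$, which is admissible because $k_2\ge 0$, yields (\ref{eq:Imk}).

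For the limit (\ref{eq:Imklim}), I would use that $\mathfrak D$ is entire, so as $\lambda\to\lambda_0$ we have $\mathfrak D(\lambda)\to\mathfrak D(\lambda_0)\in[-2,2]$. Hence $|\mathfrak D(\lambda)|^2/4\to |\mathfrak D(\lambda_0)|^2/4\in[0,1]$ and $\Imag\mathfrak D(\lambda)\to 0$; the $\Arcosh$-argument in (\ref{eq:Imk}) therefore tends to $|\mathfrak D(\lambda_0)|^2/4+\bigl|1-|\mathfrak D(\lambda_0)|^2/4\bigr|=1$, and continuity of $\Arcosh$ at $1$ gives $\Imag k(\lambda)\to 0$. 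I do not anticipate any real obstacle here; the only point requiring care is the sign choice when extracting the square root, which is settled by the non-negativity of $\sin^2+\sinh^2$ and of the principal $\Arcosh$ branch on $[1,\infty)$.
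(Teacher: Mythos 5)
Your proof is correct, and it takes a genuinely different (and arguably cleaner) route than the paper's. The paper separates the cases $\Imag k(\lambda)=0$ and $\Imag k(\lambda)>0$; in the latter case it divides the real and imaginary parts of (\ref{eq:Dk}) by $2\cosh(a\Imag k(\lambda))$ and $2\sinh(a\Imag k(\lambda))$ respectively to form a relation that, after clearing denominators, becomes a quadratic in $\cosh(2a\Imag k(\lambda))$; it then solves this quadratic and must supply an extra argument (that the ``$-$'' root is $\le 1$ while $\cosh(2a\Imag k)>1$) to pick the correct sign. You instead go forward: you write $\Real\mathfrak D=2\cosh(ak_2)\cos(ak_1)$, $\Imag\mathfrak D=-2\sinh(ak_2)\sin(ak_1)$, compute $|\mathfrak D|^2/4=\cosh^2(ak_2)-\sin^2(ak_1)$, and show by the identity $(a-b)^2+4ab=(a+b)^2$ that the radicand is the perfect square $\bigl(\sin^2(ak_1)+\sinh^2(ak_2)\bigr)^2$, so that the $\Arcosh$ argument evaluates directly to $\cosh^2(ak_2)+\sinh^2(ak_2)=\cosh(2ak_2)$. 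This sidesteps the quadratic entirely, treats $k_2=0$ and $k_2>0$ uniformly (no division by $\sinh$), and resolves the sign of the square root by the manifest non-negativity of $\sin^2+\sinh^2$ rather than by a separate inequality. Your explicit continuity argument for (\ref{eq:Imklim}), noting that the $\Arcosh$ argument tends to $1$ when $\mathfrak D(\lambda_0)\in[-2,2]$, fills in a step the paper leaves implicit. Both approaches buy the same result; yours is more streamlined.
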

\begin{proof}
If $\Imag k(\lambda) = 0$, then by equation (\ref{eq:Dk}) $\mathfrak D(\lambda) = 2 \cos a k(\lambda) \in [-2, 2]$ and the right-hand side in equation (\ref{eq:Imk}) vanishes.
If $\Imag k(\lambda) > 0$, then by equation (\ref{eq:Dk}) we find
\begin{align*}
1 &= \frac{(\Real\mathfrak D(\lambda))^2}{4 \cosh^2 (a \Imag k(\lambda))} + \frac{(\Imag\mathfrak D(\lambda))^2}{4 \sinh^2 (a \Imag k(\lambda))}
\\
&= \frac{(\Real\mathfrak D(\lambda))^2 (\cosh (2 a \Imag k(\lambda)) - 1) + (\Imag\mathfrak D(\lambda))^2 (\cosh (2 a \Imag k(\lambda)) + 1)}{2 (\cosh^2 (2 a \Imag k(\lambda)) - 1)}
\end{align*}
and hence by solving the quadratic equation
\begin{align*}
\cosh (2 a \Imag k(\lambda)) &= \frac{|\mathfrak D(\lambda)|^2}4 \pm \sqrt{1 + \frac{|\mathfrak D(\lambda)|^4}{16} - \frac{(\Real\mathfrak D(\lambda))^2 - (\Imag\mathfrak D(\lambda))^2}2}
\\
&= \frac{|\mathfrak D(\lambda)|^2}4 \pm \sqrt{\left(1 - \frac{|\mathfrak D(\lambda)|^2}4\right)^2 + (\Imag \mathfrak D(\lambda))^2}.
\end{align*}
Since
\begin{equation*}
\frac{|\mathfrak D(\lambda)|^2}4 - \sqrt{\left(1 - \frac{|\mathfrak D(\lambda)|^2}4 \right)^2 + (\Imag \mathfrak D(\lambda))^2} \le 1
\end{equation*}
and $\cosh (2 a \Imag k(\lambda)) > 1$ in the case under consideration, the
square root must have the positive sign.
\end{proof}
For $\Imag\lambda < 0$, the Floquet multiplier (eigenvalue) satisfies
\begin{equation*}
 \rho(\lambda) = \overline{\rho(\overline\lambda)}
 = \overline{e^{-i k(\overline\lambda) a}}
 = e^{-i(-\overline{k(\overline\lambda)}) a}.
\end{equation*}
This motivates the definition of the quasimomentum in the complex lower
half-plane by setting
$k(\lambda) := - \overline{k(\overline\lambda)}$ $(\lambda\in\mathbb C, \Imag\lambda < 0)$.
Then we have $\rho(\lambda) = e^{-i k(\lambda) a}$ for all $\lambda\in\mathbb C$ such that $\mathfrak D(\lambda)\notin [-2,2]$.
Note that this extended quasimomentum function is not continuous at the real axis;
nevertheless, its imaginary part is continuous as
$\Imag k(\lambda) = -(-\Imag k(\overline\lambda)) = \Imag k(\overline\lambda)$.

We now express the resolvent operator $(H_0 - \lambda)^{-1}$ in terms of a
fundamental system of Floquet solutions.
Let $\lambda\in\mathbb C$ such that $\mathfrak D(\lambda)\notin [-2, 2]$.
Then the Floquet solutions
\begin{align}
 u_+(x,\lambda) &= \Phi(x,\lambda)\,v_+(\lambda) = \rho(\lambda)^{x/a}\,\varphi_+(x, \lambda),
\nonumber
\\
 u_-(x,\lambda) &= \Phi(x, \lambda)\,v_-(\lambda) = \rho(\lambda)^{-x/a}\,\varphi_-(x, \lambda)
\label{eq:Flop}
\end{align}
with $a$-periodic functions $\varphi_\pm(\cdot, \lambda)$ are linearly independent and hence form a fundamental
system of the Dirac equation.
As $u_\pm(0, \lambda) = \varphi_\pm(0, \lambda) = v_\pm(\lambda)$, its Wronskian is
$W(\lambda) = \det(v_+(\lambda), v_-(\lambda))$.

\begin{thm}\label{thm:resolvent}
Let $\lambda\in\varrho(H_0)$. Then
\begin{equation*}
 ((H_0 - \lambda)^{-1} f)(x) = \int_{\mathbb R} G(x, t, \lambda) \, f(t) \, d t
 \qquad (x\in\mathbb R; f\in L^2(\mathbb R)^2)
\end{equation*}
with (matrix-valued) Green's function
\begin{equation*}
 G(x, t, \lambda) = -\frac {e^{i k(\lambda)\, |t - x|}}{\det(v_+(\lambda), v_-(\lambda))} \left\{\begin{matrix}
 \varphi_+(x, \lambda)\,\varphi_-(t, \lambda)^T & \hbox{\it if\/}\ t > x \\
 \varphi_-(x, \lambda)\,\varphi_+(t, \lambda)^T & \hbox{\it if\/}\ t < x
 \end{matrix} \right.
 \quad (x, t \in \mathbb R).
\end{equation*}
For all $x, t \in \mathbb R$, $x \neq t$, the Frobenius norm of the matrix
$G(x, t, \lambda)$ is
\begin{equation*}
\|G(x, t, \lambda)\|_F = \frac{e^{-\Imag k(\lambda)\,|t - x|}}{|\det(v_+(\lambda), v_-(\lambda))|} \left\{\begin{matrix}
 |\varphi_+(x, \lambda)|\,|\varphi_-(t, \lambda)| & \hbox{\it if\/}\ t > x,
\\
 |\varphi_-(x, \lambda)|\,|\varphi_+(t, \lambda)| & \hbox{\it if\/}\ t < x.
 \end{matrix}\right.
\end{equation*}
Moreover, for any $r \in (1, 2]$ and conjugate exponent $r' = 1/(1 - \frac 1 r) \ge 2$, the integral operator
$R_r(\lambda) : L^r(\mathbb R)^2 \rightarrow L^{r'}(\mathbb R)^2$,
\begin{equation*}
(R_r(\lambda) f)(x) = \int_{\mathbb R} G(x, t, \lambda)\,f(t)\,d t
\qquad (x \in \mathbb R; f \in L^r(\mathbb R)^2)
\end{equation*}
is a bounded linear operator with operator norm
$\|R_r(\lambda)\| \le C(\lambda)\left(\frac{4}{r'\,\Imag k(\lambda)}\right)^{\frac 2{r'}}$,
where
\begin{equation}
C(\lambda) := \frac{\|\varphi_+(\cdot, \lambda)\|_\infty\,\|\varphi_-(\cdot, \lambda)\|_\infty}{|\det(v_+(\lambda), v_-(\lambda))|}.
\label{eq:Cdef}
\end{equation}
\end{thm}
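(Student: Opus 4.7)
The plan is to construct the Green's function in the standard Sturm-Liouville manner adapted to the first-order Dirac system, verify that the associated integral operator inverts $H_0 - \lambda$, and then bound it on $L^r \to L^{r'}$ via Young's convolution inequality. First I would rewrite the proposed kernel in terms of the Floquet solutions $u_\pm$: since $\varphi_\pm(x,\lambda) = e^{\pm i k(\lambda) x}\, u_\pm(x,\lambda)$, the scalar factor $e^{i k(\lambda)|t-x|}$ combines with the periodic envelopes to give $G(x,t,\lambda) = -W(\lambda)^{-1}\, u_+(x,\lambda)\, u_-(t,\lambda)^T$ for $t > x$ and $G(x,t,\lambda) = -W(\lambda)^{-1}\, u_-(x,\lambda)\, u_+(t,\lambda)^T$ for $t < x$, with $W(\lambda) = \det(v_+(\lambda), v_-(\lambda))$. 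For $\lambda\in\varrho(H_0)$, necessarily $\mathfrak D(\lambda)\notin [-2,2]$ and hence $\Imag k(\lambda) > 0$, which makes $u_+$ (respectively $u_-$) decay at $-\infty$ (respectively $+\infty$), so the kernel is exponentially small off the diagonal.

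Next I would verify the two defining properties of a resolvent kernel. Off the diagonal, $G(\cdot, t, \lambda)$ is a linear combination of Floquet solutions and hence solves the homogeneous Dirac equation in $x$. For the jump at $x = t$, an elementary $2\times 2$ computation gives the rank-one identity $wv^T - vw^T = -\det(v,w)\, i\sigma_2$ for column vectors $v,w\in\mathbb C^2$; combined with the constancy $\det(u_+(x,\lambda), u_-(x,\lambda)) = \det\Phi(x,\lambda)\cdot W(\lambda) = W(\lambda)$ (Liouville's formula, as the coefficient $i\sigma_2(\lambda - m\sigma_3 - q)$ of the first-order system $\Phi' = i\sigma_2(\lambda - m\sigma_3 - q)\Phi$ is traceless), this delivers $G(x,x^-,\lambda) - G(x,x^+,\lambda) = i\sigma_2$. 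Differentiating $(R(\lambda)f)(x) = \int G(x,t,\lambda) f(t)\,dt$ classically for $f \in C_c^\infty(\mathbb R)^2$ and using $(-i\sigma_2)\cdot i\sigma_2 = \mathbb I$ shows that $(H_0 - \lambda)(R(\lambda)f) = f$. The kernel bound below makes $R(\lambda)$ a bounded operator on $L^2(\mathbb R)^2$, and by uniqueness of the resolvent it coincides with $(H_0 - \lambda)^{-1}$.

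The Frobenius-norm identity follows from $\|vw^T\|_F = |v|\,|w|$ for rank-one matrices, together with $|e^{i k(\lambda) |t-x|}| = e^{-\Imag k(\lambda)|t-x|}$ and the fact that these two factors multiply independently. For the $L^r \to L^{r'}$ bound I would estimate pointwise $\|G(x,t,\lambda)\|_F \le C(\lambda)\, e^{-\Imag k(\lambda)|t-x|}$, obtaining $|(R_r(\lambda)f)(x)| \le C(\lambda)\,(g*|f|)(x)$ with $g(y) := e^{-\Imag k(\lambda)|y|}$. Young's convolution inequality, applied with exponents satisfying $\tfrac 1{r'/2} + \tfrac 1 r = 1 + \tfrac 1{r'}$ (valid since $1 < r \le 2 \le r'$), then gives $\|R_r(\lambda)f\|_{r'} \le C(\lambda)\,\|g\|_{r'/2}\,\|f\|_r$, and the direct integration $\|g\|_{r'/2}^{r'/2} = 4/(r'\,\Imag k(\lambda))$ delivers the stated operator-norm bound.

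The main obstacle I anticipate is the algebraic bookkeeping behind the jump condition: correctly tracking the sign in the rank-one identity, the value of the Wronskian obtained from Liouville's formula, and the inversion $(-i\sigma_2)^{-1} = i\sigma_2$, so that the jump $i\sigma_2$ produces exactly the identity when paired with the leading coefficient $-i\sigma_2\,\partial_x$. Once this is set up cleanly, the Frobenius identity and the Young's-inequality estimate reduce to routine pointwise calculations.
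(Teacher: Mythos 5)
Your proposal is correct, and it agrees with the paper on the Frobenius-norm computation (via $\|vw^T\|_F = |v|\,|w|$) and on the $L^r\to L^{r'}$ bound (a pointwise estimate $\|G(x,t,\lambda)\| \le C(\lambda)\, e^{-\Imag k(\lambda)|t-x|}$ followed by Young's convolution inequality with exponents $\frac{2}{r'} + \frac 1r = 1 + \frac 1{r'}$, evaluating $\|e^{-\Imag k(\lambda)|\cdot|}\|_{r'/2}$ explicitly). Where you differ from the paper is in the first step: the paper produces the Green's function by solving the inhomogeneous equation with variation of constants on the fundamental system $(u_+,u_-)$, while you take the kernel as given and verify the two defining resolvent-kernel properties — that $G(\cdot,t,\lambda)$ solves the homogeneous Dirac equation off the diagonal, and that the jump across $t=x$ is $i\sigma_2$, so that $-i\sigma_2 \cdot i\sigma_2 = \mathbb I$ recovers $f$. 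Your rank-one identity $wv^T - vw^T = -\det(v,w)\,i\sigma_2$ and your observation that the first-order system has traceless coefficient $i\sigma_2(\lambda - m\sigma_3 - q)$ (so the Wronskian $\det(u_+(x),u_-(x))$ is constant and equal to $\det(v_+,v_-)$) are exactly the right ingredients, and the sign bookkeeping works out to $G(x,x^-) - G(x,x^+) = i\sigma_2$ as you anticipate. The paper's variation-of-constants derivation is more compact but glosses over the direct check that the integral operator is the resolvent; your verification route makes this explicit. Both are standard and equivalent in content; the one mild gap in your sketch is that the identity $(H_0-\lambda)R(\lambda)f = f$ is checked only for smooth compactly supported $f$, and a short density argument (using the boundedness of $R(\lambda)$ on $L^2$ and the closedness of $H_0$) is needed to extend it to all $f \in L^2(\mathbb R)^2$ and conclude $R(\lambda) = (H_0-\lambda)^{-1}$.
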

\noindent
{\it Remarks.\/}
1. The Green's function $G$ is in fact independent of the choice of the
eigenvectors $v_\pm(\lambda)$.

2.
In the absence of a periodic background potential $q$, an operator norm bound
for $R_r(\lambda)$ was obtained in \cite[Theorem 3.1]{JCC}.
\begin{proof}
Let $f\in L^2(\mathbb R)^2$; then solving the inhomogeneous Dirac equation
\begin{equation*}
-i \sigma_2 u'(x) + (m \sigma_3 + q(x) - \lambda)\,u(x) = f(x) \qquad (x\in\mathbb R)
\end{equation*}
by the variation of constants method on the basis of the fundamental system
$(u_+(\cdot,\lambda), u_-(\cdot,\lambda))$ gives
\begin{equation*}
 u(x) = \int_{\mathbb R} G(x, t, \lambda)\,f(t)\,d t \qquad (x\in\mathbb R).
\end{equation*}
For $x \neq t$, the Frobenius norm of the matrix $G(x, t, \lambda)$ is
\begin{align*}
\|G(x, t, \lambda)\|_F &= \sqrt{\tr (G(x, t, \lambda)^*\ G(x, t, \lambda))}
\\
&= \frac{|e^{i k(\lambda)\,|t-x|}|}{|\det(v_+(\lambda), v_-(\lambda))|}
  \sqrt{\tr(\overline{\varphi_\mp(t, \lambda)}\,\varphi_\pm(x, \lambda)^*\,\varphi_\pm(x, \lambda)\,\varphi_\mp(t, \lambda)^T)}
\\
&= \frac{e^{-\Imag k(\lambda)\,|t-x|}}{|\det(v_+(\lambda), v_-(\lambda))|}
  \sqrt{\tr(\varphi_\pm(x, \lambda)^*\,\varphi_\pm(x, \lambda)\,\varphi_\mp(t, \lambda)^T\,\overline{\varphi_\mp(t, \lambda)})}
\\
&= \frac{e^{-\Imag k(\lambda)\,|t-x|}}{|\det(v_+(\lambda), v_-(\lambda))|}
  \sqrt{|\varphi_\pm(x, \lambda)|^2\,|\varphi_\mp(t, \lambda)|^2}
\end{align*}
with the sign in the index depending on whether $t > x$ or $t < x$.
Setting
$\|\varphi_\pm(\cdot,\lambda)\|_\infty := \sup\limits_{x\in\mathbb R} |\varphi_\pm(\cdot,\lambda)|$, we can estimate the operator norm
\begin{equation}
\|G(x, t, \lambda)\| \le \|G(x, t, \lambda)\|_F
\le C(\lambda)  \, e^{-\Imag k(\lambda) |t - x|}
\label{eq:Frobest}
\end{equation}
$(x, t \in \mathbb R, t \neq x)$
with
$C(\lambda)$ defined in equation (\ref{eq:Cdef}).
Now let $f \in L^r(\mathbb R)$; then
\begin{align*}
\|R_r(\lambda) f\|_{r'} &= \left(\int_{\mathbb R} \left|\int_{\mathbb R} G(x, t, \lambda)\,f(t)\,d t\right|^{r'} d x \right)^{\frac 1{r'}}
\\
&\le \left(\int_{\mathbb R} \left(\int_{\mathbb R} \|G(x, t, \lambda)\|\,|f(t)|\,d t\right)^{r'} d x \right)^{\frac 1{r'}}
\\
&\le C(\lambda) \left(\int_{\mathbb R} \left(\int_{\mathbb R} e^{-\Imag k(\lambda)\,|t-x|}\,|f(t)|\,d t\right)^{r'} d x\right)^{\frac 1{r'}}
\\
&\le C(\lambda) 
 \left(\int_{\mathbb R} e^{-\Imag k(\lambda)\,|s|\,\frac{r'}2}\,d s\right)^{\frac 2{r'}}
 \left(\int_{\mathbb R} |f(x)|^r\,d x\right)^{\frac 1r}
\\
&= C(\lambda)\left(\frac 4{r'\,\Imag k(\lambda)}\right)^{\frac 2{r'}} \|f\|_r
\end{align*}
by Young's inequality, noting that $\frac 1r + \frac 2{r'} = \frac 1{r'} + 1$.
This shows that the integral operator $R_r(\lambda)$ (and in particular the
resolvent operator $(H_0-\lambda)^{-1} = R_2(\lambda)$) is well-defined and
bounded, with the stated operator norm estimate.
\end{proof}
\section{Eigenvalue exclusion}\label{sec:exclusion}
We now consider the Dirac operator with an additional non-periodic perturbation,
$H := H_0 + V$, where $V$ is the operator of multiplication with the
matrix-valued function $V : \mathbb R \rightarrow \mathbb C^{2\times 2}$.
We assume that $V$ is bounded and, for some $p \ge 1$, $V \in L^p(\mathbb R)^{2\times 2}$, which means that the norm
(cf.\ \cite{CS})
\begin{equation}
 \|V\|_p := \left(\int_{\mathbb R} \|V(x)\|^p\,d x \right)^{\frac 1 p}
\label{eq:Vnorm}
\end{equation}
is finite.
Here $\|V(x)\|$ is the operator norm of the matrix $V(x)$, $x\in \mathbb R$.
This is different from the operator norm $\|V\|$ of the multiplication operator
$V$ in $L^2(\mathbb R)^2$.

For each $x\in\mathbb R$, we use the polar decomposition of $V(x)$,
\begin{equation}
 V(x) = B(x) A(x), \qquad B(x) = U(x)\,|V(x)|^{\frac 12}, \qquad A(x) = |V(x)|^{\frac 12},
\label{eq:podeco}
\end{equation}
where $|V(x)| = (V(x)^*\ V(x))^{\frac 12}$ and $U(x)$ is a partial isometry of
$\mathbb C^2$, cf.\ \cite[Theorem VI.10]{RS1};
then
\begin{equation}
\|A(x)\| = \sqrt{\|V(x)\|}, \quad\|B(x)\| \le \sqrt{\|V(x)\|}
\qquad (x \in \mathbb R).
\label{eq:ABnorm}
\end{equation}
Thus we have matrix-valued functions
$A, B \in L^{2p}(\mathbb R)^{2 \times 2}$ that
give rise to bounded multiplication operators $A, B$ on $L^2(\mathbb R)^2$.

As we don't assume that $V$ is symmetric, the operator $H$ is not self-adjoint
in general; however, as a sum of a closed (self-adjoint) operator and a
bounded operator, it is closed (cf.\ \cite[Theorem 5.5]{W}).
Moreover, we have the following statement about its essential spectrum, using any of the 5 usual definitions (cf.\ \cite[Section I.4]{EE}), e.g.\ the third,
\begin{equation*}
\sigma_e(H) := \{\lambda\in\mathbb C \mid H - \lambda \ \hbox{\rm is not a
Fredholm operator}\}.
\end{equation*}
\begin{thm}\label{thm:compres}
$\sigma_e(H) = \sigma_e(H_0) = \{\lambda\in\mathbb R \mid \mathfrak D(\lambda) \in [-2, 2]\}$.
The spectrum of $H$ outside $\sigma_e(H)$ only consists of isolated eigenvalues
of finite multiplicity.
\end{thm}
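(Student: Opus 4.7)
My plan is to derive Theorem \ref{thm:compres} from Lemma \ref{lem:compres} in three stages: pass from the polar factors of $V$ to $H_0$-relative compactness of $V$; invoke the classical stability of the Fredholm essential spectrum under such perturbations; and apply the analytic Fredholm theorem to obtain discreteness of the spectrum outside $\sigma_e(H)$.

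First, the factorisation (\ref{eq:podeco}) together with the pointwise bounds (\ref{eq:ABnorm}) puts both $A$ and $B$ in $L^{2p}(\mathbb R)^{2\times 2}$. Applying Lemma \ref{lem:compres} to $A$ gives compactness of $A(H_0 - \lambda_0)^{-1}$ on $L^2(\mathbb R)^2$ for any fixed $\lambda_0 \in \varrho(H_0)$, and since $B$ is a bounded multiplication operator on $L^2(\mathbb R)^2$,
\begin{equation*}
V(H_0 - \lambda_0)^{-1} = B\,\bigl(A(H_0 - \lambda_0)^{-1}\bigr)
\end{equation*}
is compact, so $V$ is $H_0$-compact. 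The polar factorisation is essential here because $V$ itself need only lie in $L^p$, which for $p = 1$ would be insufficient to invoke Lemma \ref{lem:compres} directly.

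Second, the invariance of the Fredholm essential spectrum under $H_0$-compact perturbations (see e.g.\ \cite[Chapter IX]{EE}) yields $\sigma_e(H) = \sigma_e(H_0) = \{\lambda \in \mathbb R : \mathfrak D(\lambda) \in [-2, 2]\}$, the last identification coming from the Floquet analysis of Section \ref{sec:periodic}. All five standard notions of essential spectrum (cf.\ \cite[Section I.4]{EE}) coincide for the self-adjoint operator $H_0$ and each is stable under relatively compact perturbations, so they all agree for $H$ as well.

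For the discreteness statement, I would observe that on every connected component $\Omega$ of $\mathbb C \setminus \sigma_e(H)$ the map $\lambda \mapsto H - \lambda$ is analytic and Fredholm of index zero, so the analytic Fredholm alternative applies. To rule out the degenerate alternative, I would use the identity
\begin{equation*}
H - \lambda = \bigl(I + V(H_0 - \lambda)^{-1}\bigr)(H_0 - \lambda) \qquad (\lambda \in \varrho(H_0))
\end{equation*}
combined with the self-adjoint resolvent bound $\|(H_0 - \lambda)^{-1}\| \le 1/|\Imag \lambda|$, from which $\|V(H_0 - \lambda)^{-1}\| < 1$ for all $|\Imag \lambda|$ sufficiently large, so such $\lambda$ lie in $\varrho(H)$. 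Since $\sigma_e(H_0) \subset \mathbb R$, every component of $\mathbb C \setminus \sigma_e(H)$ is unbounded in the imaginary direction and therefore meets $\varrho(H)$, and the analytic Fredholm theorem then yields meromorphy of $(H - \lambda)^{-1}$ on $\Omega$ with finite-rank poles, corresponding precisely to isolated eigenvalues of finite algebraic multiplicity. The principal obstacle sits inside Lemma \ref{lem:compres}: establishing that $L^{2p}$-matrix multiplication is $H_0$-compact under only local integrability of $q$; granted that, everything else is a clean assembly of classical Fredholm and perturbation theory.
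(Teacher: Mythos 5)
Your proof is correct and structurally parallel to the paper's, differing only in the precise perturbation-theoretic hypothesis invoked. The paper applies the second resolvent identity to exhibit $(H_0-\lambda)^{-1}-(H-\lambda)^{-1}=(H-\lambda)^{-1}BA(H_0-\lambda)^{-1}$ as a compact operator (compactness of $A(H_0-\lambda)^{-1}$ from Lemma \ref{lem:compres}, with $(H-\lambda)^{-1}$ and $B$ bounded) and then cites \cite[Theorem IX.2.4]{EE}, which covers stability of the essential spectra under compactness of the resolvent difference. You instead verify $H_0$-relative compactness of $V$ directly by writing $V(H_0-\lambda_0)^{-1}=B\bigl(A(H_0-\lambda_0)^{-1}\bigr)$, which is marginally more economical (no need to invert $H-\lambda$ first) but appeals to a slightly different statement in \cite{EE}. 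For the discreteness claim, your explicit analytic Fredholm argument based on $H-\lambda=(I+V(H_0-\lambda)^{-1})(H_0-\lambda)$ unpacks what the paper compresses into a citation of \cite[Theorem XVII.2.1]{GGK}; both hinge on the same observation that every component of $\mathbb C\setminus\sigma_e(H)$ contains points of $\varrho(H)$, which you derive from $\|(H_0-\lambda)^{-1}\|\le 1/|\Imag\lambda|$ and the paper states via the equivalent bound $|\Imag\lambda|>\|V\|$.

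One small caution: the sentence claiming that each of the five essential spectra is stable under relatively compact perturbations is not quite accurate for $\sigma_{e5}$ in general; its stability requires precisely the extra hypothesis that each component of the complement of $\sigma_{e4}$ meets the resolvent set of both operators. You supply this input in your final paragraph, so the overall argument is sound, but it would be cleaner to flag that the $\sigma_{e5}$ case relies on that subsequent observation rather than on relative compactness alone.
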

In the proof of this theorem, we use the relative compactness of the multiplication operator $A$ with respect to $H_0$. We give a full proof of this statement (which holds for any $A\in L^{2p}(\mathbb R)^{2\times 2}$), as it
does not seem to be easily available in the
literature; note that we only assume that the periodic potential $q$ is locally integrable,
so the results of e.g.\ \cite[Theorem 4.1]{Simon} or \cite[Theorem 4.1]{JCC} are not directly applicable.
We remark that in the case $p=1$ the relative compactness can be shown more
easily by proving that $A\, (H_0 - \lambda)^{-1}$, an integral operator with
kernel $A(x)\,G(x, t, \lambda)$, is a Hilbert-Schmidt operator, using the
Frobenius norm estimate (\ref{eq:Frobest}).
\begin{lem}\label{lem:compres}
Let $\lambda\in\varrho(H_0)$. Then the operator $A\,(H_0 - \lambda)^{-1}$ is
compact.
\end{lem}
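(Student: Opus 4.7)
The plan is to approximate $A$ in $L^{2p}(\mathbb R)^{2\times 2}$ by bounded, compactly supported matrix-valued functions $A_n$, to verify that each $A_n(H_0-\lambda)^{-1}$ is Hilbert--Schmidt, and to deduce the compactness of $A(H_0-\lambda)^{-1}$ by realising it as the operator-norm limit of these compact operators. The passage to the limit rests on a uniform bound of the shape $\|A(H_0-\lambda)^{-1}\|_{L^2\to L^2}\le c(\lambda,p)\,\|A\|_{2p}$, which I expect to be the principal technical point.

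For the approximation I would take $A_n(x) := A(x)$ when $|x|\le n$ and $\|A(x)\|\le n$, and $A_n(x) := 0$ otherwise; dominated convergence then yields $\|A-A_n\|_{2p}\to 0$. For each $n$, the operator $A_n(H_0-\lambda)^{-1}$ is an integral operator with kernel $A_n(x)\,G(x,t,\lambda)$, and the Frobenius estimate (\ref{eq:Frobest}) bounds its Hilbert--Schmidt norm squared by
\begin{equation*}
C(\lambda)^2 \int_{\mathbb R} \|A_n(x)\|^2 \int_{\mathbb R} e^{-2\Imag k(\lambda)|t-x|}\,dt\,dx,
\end{equation*}
which is finite because $\Imag k(\lambda)>0$ and $A_n$ is bounded with compact support. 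Hence $A_n(H_0-\lambda)^{-1}$ is compact.

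For the operator norm bound, starting from the pointwise inequality $|A(H_0-\lambda)^{-1} f(x)|\le C(\lambda)\,\|A(x)\|\,(K\ast|f|)(x)$ with $K(s):=e^{-\Imag k(\lambda)|s|}$ (immediate from the kernel formula and Frobenius bound of Theorem \ref{thm:resolvent}), H\"older's inequality with exponents $p$ and $p/(p-1)$ gives
\begin{equation*}
\|A(H_0-\lambda)^{-1} f\|_2 \le C(\lambda)\,\|A\|_{2p}\,\|K\ast|f|\|_q,\qquad q := \frac{2p}{p-1},
\end{equation*}
and Young's convolution inequality yields $\|K\ast|f|\|_q\le \|K\|_b\,\|f\|_2$ with $b := 2p/(2p-1)\in[1,2]$. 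Since $K\in L^b(\mathbb R)$ for every $b\ge 1$, this furnishes the required estimate with a constant depending only on $\lambda$ and $p$. Applying it to $A-A_n$ in place of $A$ shows $\|(A-A_n)(H_0-\lambda)^{-1}\|\to 0$, so $A(H_0-\lambda)^{-1}$ is a norm limit of compact operators, hence compact. The chief obstacle lies in this last estimate: the H\"older/Young combination must be set up so that exactly one factor of $\|A\|_{2p}$ is extracted, balancing the exponential decay of the resolvent kernel against the $L^{2p}$-integrability of $A$; the boundary exponents $b=2$ (at $p=1$, corresponding to the direct Hilbert--Schmidt argument noted in the paper) and $b=1$ (as $p\to\infty$) serve as useful consistency checks.
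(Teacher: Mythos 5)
Your argument is correct and reaches the same conclusion as the paper's, but by a genuinely different (and arguably slicker) route, so the comparison is worth spelling out. For the approximants, the paper truncates to $C_0^\infty$ functions and proves compactness of $A_n(H_0-\lambda)^{-1}$ by an Arzel\`a--Ascoli argument (uniform boundedness and equicontinuity of the images), whereas you truncate to bounded, compactly supported $A_n$ and observe directly that the kernel $A_n(x)G(x,t,\lambda)$ is square-integrable by the Frobenius estimate, so the operator is Hilbert--Schmidt; this is shorter and does not need any continuity of $A_n$ or of the Green's function. For the crucial uniform operator-norm estimate $\|A(H_0-\lambda)^{-1}\|\lesssim_\lambda\|A\|_{2p}$, the paper works on the Fourier side: it pairs against a test function, uses Plancherel, the explicit Fourier transform of $e^{-\Imag k(\lambda)|\cdot|}$, the Hausdorff--Young inequality, and two applications of H\"older. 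You instead stay entirely in physical space: the pointwise bound $|A(H_0-\lambda)^{-1}f(x)|\le C(\lambda)\|A(x)\|\,(K*|f|)(x)$ with $K(s)=e^{-\Imag k(\lambda)|s|}$, then a single H\"older step with exponents $p$ and $p/(p-1)$, then Young's convolution inequality with $\|K\|_{2p/(2p-1)}<\infty$. Your version has the advantage that the case $p=1$ (Young exponent $b=2$) is not a separate limiting case but sits at one end of the same formula, whereas the paper's Fourier argument has to replace the Hausdorff--Young term by an $L^\infty$ bound when $p=1$. Both yield a bound of the form $c(\lambda,p)\|A\|_{2p}$ sufficient to pass to the limit; the paper's constant is more explicit in terms of an integral $\int(\xi^2+(\Imag k)^2)^{-2p}d\xi$, while yours involves $\|K\|_{2p/(2p-1)}$, but the two are of the same quality and either would do.
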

\begin{proof}
(a)
We first show that the statement is true for $A \in C_0^\infty(\mathbb R)^{2\times 2}$.
Let ${a < b}$ be such that $\mathop{\rm supp} A \subset [a, b]$.
Let $(u_n)_{n\in\mathbb N}$ be a bounded sequence in $L^2(\mathbb R)^2$,
$\|u_n\|_2 \le K$ $(n\in\mathbb N)$,
and set $y_n := A\,(H_0 - \lambda)\,u_n$ $(n\in\mathbb N)$.
Then, for all $x \in [a, b]$ and $n\in\mathbb N$,
we have by Theorem \ref{thm:resolvent}
\begin{align*}
|y_n(x)| &\le \|A(x)\|\left|\int_{\mathbb R} G(x, t, \lambda)\,u_n(t)\,d t \right|
\le \|A(x)\| \left(\int_{\mathbb R} \|G(x, t, \lambda)\|^2\,d t\right)^{\frac 12} \|u_n\|_2
\\
&\le \sup_{z\in\mathbb R} \|A(z)\|\left(\int_{\mathbb R} e^{-2 \Imag k(\lambda)\,|t|}\,d t\right)^{\frac 12}\,C(\lambda)\,K
< \infty
\end{align*}
and $y_n(x) = 0$ for all $x\in\mathbb R\setminus [a,b]$, so the sequence of
functions $(y_n)_{n\in\mathbb N}$ is uniformly bounded.
Also, for $a\le x < z\le b$ we find, using the estimate (\ref{eq:Frobest}),
\begin{align*}
&|y_n(x) - y_n(z)| = \left| A(x)\int_{\mathbb R} G(x, t, \lambda)\,u_n(t)\,d t - A(z)\int_{\mathbb R} G(z, t, \lambda)\,u_n(t)\,d t \right|
\\
&\le \frac 1W \Bigg(
\int_{-\infty}^x \left|\left(A(x)\,e^{i k(\lambda)\,(x-t)}\varphi_-(x) - A(z)\,e^{i k(\lambda)\,(z-t)}\varphi_-(z)\right)\varphi_+(t)^T\, u_n(t) \right| d t
\\
&\ +
\int_x^z \left|\left(A(x)e^{i k(\lambda)(t-x)}\varphi_+(x)\varphi_-(t)^T - A(z)e^{i k(\lambda)(z-t)}\varphi_-(z)\varphi_+(t)^T\right) u_n(t) \right| d t
\\
&\ +
\int_z^\infty \left|\left(A(x)\,e^{i k(\lambda)\,(t-x)}\,\varphi_+(x) - A(z)\,e^{i k(\lambda)\,(t-z)}\,\varphi_+(z)\right)\varphi_-(t)^T\, u_n(t) \right| d t\Bigg),
\end{align*}
where we abbreviated $W := \det(v_+(\lambda), v_-(\lambda))$.
Here the first integral is less than or equal to
\begin{equation*}
\left|e^{i k(\lambda) x} A(x)\,\varphi_-(x) - e^{i k(\lambda) z} A(z)\,\varphi_-(z)\right| \left(\int_{-\infty}^b e^{2\Imag k(\lambda) t} |\varphi_+(t)|^2 \,dt\right)^{\frac 12}\,K,
\end{equation*}
and as $A$ and $\varphi_-$ are continuous and hence uniformly continuous on $[a, b]$, this integral tends to 0 as $|x-z| \rightarrow 0$ uniformly on $[a, b]$ and in $n\in\mathbb N$.
Analogous reasoning applies to the third integral.
The second integral can be estimated by
\begin{align*}
&\left(\int_x^z \|A(x) e^{i k(\lambda)\,|t-x|} \varphi_+(x) \varphi_-(t)^T - A(z) e^{i k(\lambda)\,|t-z|} \varphi_-(z) \varphi_+(t)^T\|^2 dt\right)^{\frac 12}
\\
&\qquad \times
\left(\int_x^z |u_n(t)|^2 dt\right)^{\frac 12}
\le 2 \sup_{t\in[a,b]} \|A(t)\| \sup_{t\in[a,b]} |\varphi_+(t)| \sup_{t\in[a,b]} |\varphi_-(t)|\,K\,\sqrt{z-x},
\end{align*}
which tends to 0 as $|x-z|\rightarrow 0$ uniformly on $[a,b]$ and in $n\in\mathbb N$.
Consequently, the sequence of functions $(y_n)_{n\in\mathbb N}$ is also equicontinuous. By the Arzel\`a-Ascoli Theorem, it has a subsequence that is uniformly
convergent and hence, in view of the compact support, also converges in $L^2(\mathbb R)^2$.

(b)
Now let $A\in L^{2p}(\mathbb R)^{2\times 2}$.
Let $u, v\in L^2(\mathbb R)^2$, $\|u\|_2 = \|v\|_2 = 1$. Then
\begin{align*}
|(A\,&(H_0 - \lambda)^{-1} u, v)| = \left|\int_{\mathbb R} A(x) \int_{\mathbb R} G(x, t, \lambda)\,u(t)\,dt\,\overline{v(x)}\,dx\right|
\\
&\le C(\lambda) \int_{\mathbb R} |u(t)| \left(\int_{\mathbb R} e^{-\Imag k(\lambda)\,|t-x|}\,\|A(x)\|\,|v(x)|\,dx\right)\,dt
\\
&= C(\lambda) \int_{\mathbb R} \overline{F(|u|)}\,F\left(e^{-\Imag k(\lambda)\,|\cdot|} * (\|A(\cdot)\|\,|v|)\right)
\\
&= C(\lambda) \int_{\mathbb R} \overline{F(|u|)} \sqrt{2\pi}\,F(e^{-\Imag k(\lambda)\,|\cdot|})\,F(\|A(\cdot)\|\,|v|)
\\
&= 2\Imag k(\lambda)\,C(\lambda) \int_{\mathbb R} F(\|A(\cdot)\|\,|v|)(\xi)\,\frac{\overline{F(|u|)(\xi)}}{\xi^2 + (\Imag k(\lambda))^2}\,d\xi
\\
&\le 2\Imag k(\lambda)\,C(\lambda) \left(\int_{\mathbb R} \left|F(\|A(\cdot)\|\,|v|)\right|^{r'}\right)^{\frac 1{r'}}
\left(\int_{\mathbb R} \left|\frac{F(|u|)(\xi)}{\xi^2 + (\Imag k(\lambda))^2}\right|^r d\xi\right)^{\frac 1r},
\end{align*}
where we used the Plancherel identity for the Fourier transform $F$ and
then H\"older's inequality with exponent $r := \frac{2p}{p+1} \in [1, 2)$ and conjugate
exponent $r'$.
(In the case $p=1$, where $r = 1$, the above and the following estimates hold with
$\left(\int_{\mathbb R} |F(\|A(\cdot)\|\,|v|)|^{r'} \right)^{1/r'}$
replaced with
$\sup_{x\in\mathbb R} |F(\|A(\cdot)\|\,|v|)(x)|$.)
By the Hausdorff-Young inequality,
\begin{align*}
\left(\int_{\mathbb R} |F(\|A(\cdot)\|\,|v|)|^{r'}\right)^{\frac 1{r'}}
\!\!&\le \sqrt{2\pi}^{1 - \frac 2r} \left(\int_{\mathbb R} \|A(\cdot)\|^r\,|v|^r\right)^{\frac 1r}
\\
&= \frac 1{\sqrt{2\pi}^{\frac 1p}}\left(\int_{\mathbb R} \left(\|A(\cdot)\|^{2p}\right)^{\frac 1q} \left(|v|^2\right)^{\frac 1{q'}}\right)^{\frac 1r}
\\
&\le \frac 1{\sqrt{2\pi}^{\frac 1p}} \left(\int_{\mathbb R} \|A(\cdot)\|^{2p}\right)^{\frac 1{rq}} \left(\int_{\mathbb R} |v|^2\right)^{\frac 1{rq'}}
\!\!= \frac 1{\sqrt{2\pi}^{\frac 1p}} \|A\|_{2p},
\end{align*}
using H\"older's inequality with exponents $q := p+1 = \frac{2p}r$ and
$q' = \frac{p+1}p = \frac 2r$.
The same H\"older inequality gives
\begin{equation*}
\left(\int_{\mathbb R} \left|\frac{F(|u|)(\xi)}{\xi^2 + (\Imag k(\lambda))^2}\right|^r d\xi\right)^{\frac 1r}
\le \left(\int_{\mathbb R} (\xi^2 + (\Imag k(\lambda))^2)^{-2p}d\xi\right)^{\frac 1{2p}} \|F(|u|)\|_2.
\end{equation*}
As $\|F(|u|)\|_2 = \|u\|_2 = 1$, taking the supremum over $u, v$ gives the bound for the operator norm
\begin{equation*}
\|A\,(H_0 - \lambda)^{-1}\| \le 2\Imag k(\lambda)\,C(\lambda) \left(\frac 1{2\pi}\int_{\mathbb R} (\xi^2 + (\Imag k(\lambda))^2)^{-2p}\,d\xi\right)^{\frac 1{2p}} \|A\|_{2p}.
\end{equation*}
As $C_0^\infty(\mathbb R)$ is dense in $L^{2p}(\mathbb R)$, there is a sequence
$(A_n)_{n\in\mathbb N}$ in $C_0^\infty(\mathbb R)^{2\times 2}$ that converges
to $A$ in $\|\cdot\|_{2p}$; by the above estimate, $A_n\,(H_0 - \lambda)^{-1}$
converges to $A\,(H_0 - \lambda)^{-1}$ in operator norm and the statement of the lemma follows from (a) and the fact that the space of compact operators is closed in the operator norm.
\end{proof}
We are now ready to prove Theorem \ref{thm:compres}.
\begin{proof}[Proof of Theorem \ref{thm:compres}]
The resolvent set of $H$, $\varrho(H)$, contains points in the upper and the
lower complex half-planes, as $\lambda \in \varrho(H)$ if $|\Imag\lambda| > \|V\|$.
By the resolvent identity, we find for $\lambda\in\varrho(H)\cap\varrho(H_0)$
\begin{equation*}
(H_0 - \lambda)^{-1} - (H - \lambda)^{-1} = (H - \lambda)^{-1}\,B\,A\,(H_0 - \lambda)^{-1}.
\end{equation*}
As $(H - \lambda)^{-1}$ and $B$ are bounded operators, this resolvent difference is compact by Lemma \ref{lem:compres}.
We can now apply Theorem IX.2.4 of \cite{EE} to conclude the equality of the
essential spectra (all five types) of $H$ and of $H_0$.

The complement of the essential spectrum of $H$, $\mathbb C \setminus \sigma_e(H)$, is open and either connected (if $H_0$ has at least one spectral gap) or
has the upper and lower complex half-planes as connected components (if
$\sigma(H_0) = \mathbb R$ --- this happens if $m = 0$, see
\cite[Proposition 1]{KMS1}).
In either case, each component of the complement of $\sigma_e(H)$ contains
points of the resolvent set $\varrho(H)$, and we can therefore apply
Theorem XVII.2.1 of \cite{GGK} to conclude that the spectrum of $H$ outside
$\sigma_e(H)$ only consists of isolated eigenvalues of finite multiplicity.
\end{proof}

In the statement of the eigenvalue exclusion theorem, we use the function
$\Gamma : D(\Gamma) \rightarrow (0, 1]$,
$D(\Gamma) = \{A \in \mathbb C^{2\times 2} \mid A\ \hbox{\rm has two distinct
eigenvalues}\}$,
\begin{equation}
\ \Gamma(A) = \frac{|\det (v_+, v_-)|}{|v_+|\,|v_-|},
\label{eq:Gammadef}
\end{equation}
where $v_\pm \in \mathbb C^2\setminus \{0\}$ are eigenvectors of $A$ for the
two different eigenvalues.
As the eigenvectors are uniquely determined up to a complex factor, $\Gamma(A)$
does not depend on the choice of eigenvectors and is therefore well-defined.
The domain $D(\Gamma)$ is an open subset of $\mathbb C^{2\times 2}$ and
$\Gamma$ is continuous. However, $\Gamma$ cannot be continuously extended to
all of $\mathbb C^{2\times 2}$; for example,
\begin{equation*}
\lim_{\varepsilon\rightarrow 0} \Gamma\begin{pmatrix} 1 & 0 \\ 0 & 1 + \varepsilon \end{pmatrix} = 1 \neq
 0 = \lim_{\varepsilon\rightarrow 0} \Gamma\begin{pmatrix} 1 & \varepsilon \\ \varepsilon^2 & 1 \end{pmatrix},
\end{equation*}
so $\Gamma$ has no continuous extension at the unit matrix.
For the monodromy matrix $M$ of equation (\ref{eq:pD}), we have the following
statement.
\begin{lem}\label{lem:monogam}
For all $\lambda\in\mathbb C$, $M(\lambda)\in D(\Gamma)$ if and only if
$\mathfrak D(\lambda) \notin \{-2, 2\}$.
\end{lem}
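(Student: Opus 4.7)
The plan is to read everything off from the characteristic polynomial of the monodromy matrix. Since $\det M(\lambda) = 1$, as recalled in Section \ref{sec:periodic}, the characteristic polynomial of $M(\lambda)$ is the monic quadratic $\mu^2 - \mathfrak{D}(\lambda)\,\mu + 1$, whose discriminant is
\begin{equation*}
\mathfrak{D}(\lambda)^2 - 4 = (\mathfrak{D}(\lambda) - 2)(\mathfrak{D}(\lambda) + 2).
\end{equation*}
By the definition of $D(\Gamma)$, $M(\lambda) \in D(\Gamma)$ means that $M(\lambda)$ has two distinct eigenvalues, which is equivalent to the characteristic polynomial having two distinct roots, which in turn is equivalent to the nonvanishing of its discriminant, i.e.\ $\mathfrak{D}(\lambda) \notin \{-2, 2\}$. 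This gives both implications at once.

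For completeness, in the excluded cases $\mathfrak{D}(\lambda) = \pm 2$ the characteristic polynomial factors as $(\mu \mp 1)^2$, so $M(\lambda)$ has a single eigenvalue $\pm 1$ of algebraic multiplicity $2$, and hence lies outside $D(\Gamma)$ regardless of whether $M(\lambda) = \pm \mathbb{I}$ or $M(\lambda)$ is a nontrivial Jordan block (the dichotomy already recorded in Section \ref{sec:periodic}). There is no real obstacle here: the lemma is essentially a reformulation, in the language of the function $\Gamma$, of the observation already used in Section \ref{sec:periodic} that the two Floquet multipliers coincide exactly at the band edges $\mathfrak{D}(\lambda) \in \{-2, 2\}$.
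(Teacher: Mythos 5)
Your proof is correct and is essentially the same observation the paper relies on: Section \ref{sec:periodic} already records that the characteristic equation $\mu^2 - \mathfrak D(\lambda)\mu + 1 = 0$ (using $\det M(\lambda)=1$) has two distinct roots precisely when $\mathfrak D(\lambda) \notin \{-2,2\}$, and the lemma is just a restatement of this in terms of $D(\Gamma)$. The paper gives no separate proof because it treats the lemma as immediate from that discussion; your argument makes the same point explicitly.
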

We can now state the main eigenvalue exclusion theorem.
\begin{thm}\label{thm:eet}
Let
$p \ge 1$ and let
$V \in L^p(\mathbb R)^{2\times 2} \cap L^\infty(\mathbb R)^{2\times 2}$.
Then
$\lambda \in \mathbb C \setminus \sigma_e(H)$
is not an eigenvalue of $H$ if
\begin{align*}
 \|V\|_1 &< \Gamma(M(\lambda))\,\gamma_+(\lambda)\,\gamma_-(\lambda)
 \qquad (\hbox{\it if $p=1$}),
\\
 \|V\|_p &< \Gamma(M(\lambda))\,\gamma_+(\lambda)\,\gamma_-(\lambda)
 \,(\Imag k(\lambda))^{\frac{p-1}p} \left(\frac p{2(p-1)}\right)^{\frac {p-1}p}
 \qquad (\hbox{\it if $p > 1$}),
\end{align*}
where
\begin{equation}
\gamma_\pm(\lambda) = \frac{|\varphi_\pm(0,\lambda)|}{\sup_{x\in[0,a]} |\varphi_\pm(x,\lambda)|}
\label{eq:gammadef}
\end{equation}
and $\varphi_\pm$ are the periodic functions in equation $(\ref{eq:Flop})$.
\end{thm}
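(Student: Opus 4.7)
The plan is to set up a Birman--Schwinger reduction based on the factorisation $V = BA$ from (\ref{eq:podeco}). By Theorem \ref{thm:compres} and self-adjointness of $H_0$, the hypothesis $\lambda \in \mathbb C \setminus \sigma_e(H)$ gives $\lambda \in \varrho(H_0)$, so the resolvent $R_0(\lambda) := (H_0 - \lambda)^{-1}$ is bounded on $L^2(\mathbb R)^2$. If $u$ is an eigenfunction of $H$ at $\lambda$, then $(H_0 - \lambda) u = -B(Au)$, and hence $u = -R_0(\lambda)\,B\,(Au)$. Writing $\psi := Au$, we have $\psi \neq 0$: otherwise $Vu = BAu = 0$ would yield $H_0 u = \lambda u$, contradicting $\lambda \in \varrho(H_0)$. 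Multiplying by $A$ gives the Birman--Schwinger identity $\psi = -A\,R_0(\lambda)\,B\,\psi$, forcing $\|A\,R_0(\lambda)\,B\|_{L^2 \to L^2} \ge 1$. Thus it suffices to show that the hypothesis implies $\|A\,R_0(\lambda)\,B\|_{L^2 \to L^2} < 1$.

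To estimate this operator norm, I insert the Green's function from Theorem \ref{thm:resolvent}. Its rank-one structure yields, for $x \neq t$,
\begin{equation*}
\|A(x)\,G(x,t,\lambda)\,B(t)\| \le \frac{e^{-\Imag k(\lambda)|t-x|}}{|\det(v_+(\lambda), v_-(\lambda))|}\,\|A(x)\|\,\|B(t)\|\,|\varphi_\pm(x,\lambda)|\,|\varphi_\mp(t,\lambda)|
\end{equation*}
(with sign depending on $t \gtrless x$), and bounding $|\varphi_\pm|$ by $\|\varphi_\pm(\cdot,\lambda)\|_\infty$ produces the uniform estimate $\|A(x)\,G(x,t,\lambda)\,B(t)\| \le C(\lambda)\,e^{-\Imag k(\lambda)|t-x|}\,\|A(x)\|\,\|B(t)\|$, with $C(\lambda)$ as in (\ref{eq:Cdef}).

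For unit-norm $f, g \in L^2(\mathbb R)^2$, set $F(t) := \|B(t)\|\,|f(t)|$ and $\widetilde G(x) := \|A(x)\|\,|g(x)|$. H\"older's inequality with exponents $2p$ and $2$ combined with (\ref{eq:ABnorm}) gives $\|F\|_r \le \|V\|_p^{1/2}\|f\|_2$ and $\|\widetilde G\|_r \le \|V\|_p^{1/2}\|g\|_2$ for $r := 2p/(p+1)$. The pairing reduces to $|(A\,R_0(\lambda)\,B\,f, g)| \le C(\lambda)\,(e^{-\Imag k(\lambda)|\cdot|} * F,\ \widetilde G)$. For $p > 1$, Young's convolution inequality with conjugate exponent $p' = p/(p-1)$ gives $\|e^{-\Imag k(\lambda)|\cdot|} * F\|_{r'} \le \|e^{-\Imag k(\lambda)|\cdot|}\|_{p'}\,\|F\|_r$, and a direct computation yields $\|e^{-\Imag k(\lambda)|\cdot|}\|_{p'} = (2(p-1)/(p\,\Imag k(\lambda)))^{(p-1)/p}$; a final H\"older estimate against $\widetilde G \in L^r$ produces
\begin{equation*}
\|A\,R_0(\lambda)\,B\| \le C(\lambda)\,\left(\tfrac{2(p-1)}{p\,\Imag k(\lambda)}\right)^{\frac{p-1}{p}} \|V\|_p.
\end{equation*}
For $p = 1$ the exponents degenerate ($r = 1$, $p' = \infty$); using $\|e^{-\Imag k(\lambda)|\cdot|}\|_\infty = 1$ directly gives $\|A\,R_0(\lambda)\,B\| \le C(\lambda)\,\|V\|_1$.

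The conclusion then rests on the identity $C(\lambda)^{-1} = \Gamma(M(\lambda))\,\gamma_+(\lambda)\,\gamma_-(\lambda)$, which follows immediately from the definitions: $v_\pm(\lambda) = \varphi_\pm(0,\lambda)$ by (\ref{eq:Flop}), and $\sup_{x \in [0,a]} |\varphi_\pm(x,\lambda)| = \|\varphi_\pm(\cdot,\lambda)\|_\infty$ by $a$-periodicity, so that (\ref{eq:Gammadef}), (\ref{eq:gammadef}), and (\ref{eq:Cdef}) combine to the stated identity; note that $\Gamma(M(\lambda))$ is well-defined by Lemma \ref{lem:monogam} since $\mathfrak D(\lambda) \notin [-2, 2]$. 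Inserting the identity into the operator-norm bound shows that the hypothesis of the theorem is exactly $\|A\,R_0(\lambda)\,B\| < 1$, completing the exclusion. The main technical subtlety is the choice of H\"older/Young exponents to convert the $L^{2p}$ information on $A$ and $B$ into an $L^2 \to L^2$ operator bound for the sandwich $A\,R_0(\lambda)\,B$ — essentially the same bookkeeping as in Lemma \ref{lem:compres}, but here symmetric in $A$ and $B$ rather than acting on $A$ alone, and requiring separate treatment of the degenerate case $p = 1$.
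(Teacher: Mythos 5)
Your proposal is correct and follows essentially the same route as the paper: Birman--Schwinger reduction with the polar factorisation $V = BA$, the Frobenius-norm bound $\|G(x,t,\lambda)\| \le C(\lambda)\,e^{-\Imag k(\lambda)|t-x|}$, H\"older with exponents $2p$ and $2$ to control $\|A\|$, $\|B\|$, Young's inequality for the convolution kernel, and the identity $C(\lambda)^{-1}=\Gamma(M(\lambda))\gamma_+(\lambda)\gamma_-(\lambda)$. The only cosmetic difference is that you carry out the Young/H\"older computation inline on the bilinear form, whereas the paper packages it as the $L^r\to L^{r'}$ operator-norm bound for $R_r(\lambda)$ already established in Theorem \ref{thm:resolvent} and factors the Birman--Schwinger operator through the spaces $L^{r'}$ and $L^r$.
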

\noindent
{\it Remark.\/} The additional factor that appears on the right-hand side of
the inequality in Theorem \ref{thm:eet} for $p > 1$ tends to 1 as $p \rightarrow 1$, so the exclusion criterion is formally continuous in $p$.
\begin{proof}
By the Birman-Schwinger Principle (see e.g.\ \cite[Theorem B.2]{BKN}), $\lambda$ is an eigenvalue of $H_0 + V$
if and only if $-1$ is an eigenvalue of $A\,(H_0 - \lambda)^{-1}\,B$,
where $A$, $B$ are as in equation (\ref{eq:podeco}).

{\it Case 1:\/} $p = 1$.
For $u, v \in L^2(\mathbb R)^2$, we obtain from Theorem 1 and the estimate (\ref{eq:Frobest}),
noting that $e^{-\Imag k(\lambda)\,|t - x|} \le 1$, 
\begin{align*}
\left|(A(H_0-\lambda)^{-1} B u, v)\right| &= \left|\int_{-\infty}^\infty \left( A(x) \int_{-\infty}^\infty G(x, y, \lambda) B(y) u(y)\,d y\right)^T \overline{v(x)}\,d x \right|
\\
&\le \int_{-\infty}^\infty \|A(x)\|\,\|G(x, y, \lambda)\|\,\|B(y)\|\,|u(y)|\,|v(x)|\,d y\,d x
\\
&\le C(\lambda)\left(\int_{-\infty}^\infty \|A(x)\|\,|v(x)|\,d x\right)\left(\int_{-\infty}^\infty \|B(y)\|\,|u(y)|\,d y\right)
\\
&\le 
 C(\lambda)\,\|V\|_1\,\|v\|_2\,\|u\|_2,
\end{align*}
where we used H\"older's inequality and the estimate (\ref{eq:ABnorm}) in the
last step.
Setting $v := A (H_0-\lambda)^{-1} B u$ and taking the supremum over $u$, we hence find
the estimate for the operator norm of the Birman-Schwinger kernel
\begin{equation*}
 \|A (H_0-\lambda)^{-1} B\| \le C(\lambda)\,\|V\|_1.
\end{equation*}

{\it Case 2:\/} $p > 1$.
Let $r := \frac{2p}{p+1} \in (1, 2)$ with conjugate exponent $r' = \frac{2p}{p-1} > 2$. We now associate the matrix-valued functions $A$ and $B$ with
multiplication operators
$A_{r',2} : L^{r'}(\mathbb R)^2 \rightarrow L^2(\mathbb R)^2$,
$B_{2, r} : L^2(\mathbb R)^2 \rightarrow L^r(\mathbb R)^2$
and write the Birman-Schwinger kernel as
$A (H_0 - \lambda)^{-1} B = A_{r',2} R_r(\lambda) B_{2, r}$,
where the operator $R_r(\lambda) : L^r(\mathbb R)^2 \rightarrow L^{r'}(\mathbb R)^2$ is defined as in Theorem \ref{thm:resolvent}.
For $u\in L^{r'}(\mathbb R)^2$, we find
\begin{align*}
\|A_{r',2} u\|_2 &= \left(\int_{\mathbb R} |A(x)\,u(x)|^2\,dx\right)^{\frac 12}
\le \left(\int_{\mathbb R} \|A(x)\|^2 \left(|u(x)|^{r'}\right)^{\frac 2{r'}} dx\right)^{\frac 12}
\\
&\le \left(\int_{\mathbb R} \|V(x)\|^{\frac{r'}{r'-2}} dx\right)^{\frac{r'-2}{2r'}} \|u\|_{r'},
\end{align*}
using H\"older's inequality with exponents $\frac {r'}2$ and $\frac {r'}{r'-2}$.
As $\frac {r'}{r'-2} = p$, we obtain the operator norm estimate
$\|A_{r',2}\| \le \|V\|_p^{\frac 12}$.
Similarly, we find for $u \in L^2(\mathbb R)^2$
\begin{align*}
\|B_{2,r} u\|_r &= \left(\int_{\mathbb R} |B(x)\,u(x)|^r\,dx\right)^{\frac 1r}
\le \left(\int_{\mathbb R} \|B(x)\|^r \left(|u(x)|^2\right)^{\frac r2} dx\right)^{\frac 1r}
\\
&\le \left(\int_{\mathbb R} \|V(x)\|^{\frac r{2-r}} dx\right)^{\frac {2-r}{2r}} \|u\|_2,
\end{align*}
using H\"older's inequality with exponents $\frac 2r$ and $\frac 2{2-r}$,
and hence, as $\frac r{2-r} = p$, the operator norm estimate
$\|B_{2,r}\| \le \|V\|_p^{\frac 12}$.
In conjunction with Theorem \ref{thm:resolvent}, we obtain
\begin{align*}
\|A (H_0 - \lambda)^{-1} B\| &= \|A_{r',2} R_r(\lambda) B_{2,r}\|
\le \|A_{r',2}\|\,\|R_r(\lambda)\|\,\|B_{2,r}\|
\\
&\le C(\lambda) \left(\frac 2{\Imag k(\lambda)}\,\frac {p-1}p\right)^{\frac {p-1}p} \|V\|_p,
\end{align*}
since $\frac 2{r'} = \frac{p-1}p$.

Now if $\lambda$ is an eigenvalue of $H$, then $-1$ is an eigenvalue of $A (H_0-\lambda) B$ and therefore $\|A (H_0-\lambda)^{-1} B\| \ge 1$;
noting that $\frac 1{C(\lambda)} = \Gamma(M(\lambda))\,\gamma_+(\lambda)\,\gamma_-(\lambda)$ since $\varphi_\pm(0,\lambda) = v_\pm(\lambda)$, we obtain the eigenvalue exclusion criteria in the theorem by contraposition.
\end{proof}
\section{Behaviour near the essential spectrum}\label{sec:near}
In this section we study the behaviour of the right-hand side of the inequalities
in Theorem \ref{thm:eet}, in particular as $\lambda$ approaches the essential spectrum $\sigma_e(H)$.
We begin by finding a positive lower bound for the factors $\gamma_\pm(\lambda)$ defined in equation (\ref{eq:gammadef}).
\begin{thm}\label{thm:gammabound}
(a)
Let $\lambda\in\mathbb C$ such that $\mathfrak D(\lambda) \notin [-2, 2]$.
Then
\begin{equation*}
e^{-a \left(\Imag k(\lambda) + \sqrt{m^2 + (\Imag \lambda)^2}\right)} \le
\gamma_\pm(\lambda) \le 1.
\end{equation*}

\noindent
(b)
Let $\lambda_0 \in \mathbb R$ be such that $\mathfrak D(\lambda_0) \in [-2, 2]$. Then
\begin{equation*}
e^{-a m} \le \liminf_{\lambda\rightarrow\lambda_0} \gamma_\pm(\lambda) \le 1.
\end{equation*}
\end{thm}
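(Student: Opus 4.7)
The upper bound $\gamma_\pm(\lambda)\le 1$ is immediate from the definition since $0\in[0,a]$, and in part (b) this bound passes through the $\liminf$ without comment. For the lower bound in (a), my plan is to derive a Gronwall-type exponential growth estimate for arbitrary solutions of the Dirac equation (\ref{eq:Dirac}) and then translate it into a bound on the periodic factors $\varphi_\pm$ via the Floquet decomposition (\ref{eq:Flop}).

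First I would rewrite (\ref{eq:Dirac}) as $u'(x)=[i(\lambda-q(x))\sigma_2+m\sigma_1]\,u(x)$, using $\sigma_2^2=\mathbb I$ and $\sigma_2\sigma_3=i\sigma_1$. Differentiating $|u(x)|^2=u(x)^*u(x)$ along a solution, the anti-Hermitian part $i(\Real\lambda-q(x))\sigma_2$ of the coefficient matrix cancels against its adjoint, leaving
\begin{equation*}
\frac{d}{dx}|u(x)|^2 = 2\,u(x)^*\left[m\sigma_1 - (\Imag\lambda)\sigma_2\right]u(x).
\end{equation*}
The Hermitian matrix $m\sigma_1-(\Imag\lambda)\sigma_2$ squares to $(m^2+(\Imag\lambda)^2)\mathbb I$ by the Pauli anticommutation $\sigma_1\sigma_2+\sigma_2\sigma_1=0$, so its operator norm equals $\sqrt{m^2+(\Imag\lambda)^2}$. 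A standard Gronwall argument then produces
\begin{equation*}
|u(x)| \le |u(0)|\,e^{\sqrt{m^2+(\Imag\lambda)^2}\,|x|} \qquad (x\in\mathbb R)
\end{equation*}
for every solution of the Dirac equation.

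Applying this to the Floquet solutions $u_\pm$, for which $u_\pm(0,\lambda)=v_\pm(\lambda)=\varphi_\pm(0,\lambda)$, and observing that $|\rho(\lambda)^{\mp x/a}|\le|\rho(\lambda)|=e^{(\Imag k(\lambda))\,a}$ for $x\in[0,a]$ (since $\Imag k(\lambda)\ge 0$), I would conclude
\begin{equation*}
|\varphi_\pm(x,\lambda)| = |\rho(\lambda)|^{\mp x/a}\,|u_\pm(x,\lambda)| \le e^{(\Imag k(\lambda)+\sqrt{m^2+(\Imag\lambda)^2})\,a}\,|\varphi_\pm(0,\lambda)|
\end{equation*}
uniformly on $[0,a]$, which is exactly the lower bound on $\gamma_\pm(\lambda)$ claimed in (a). For part (b), equation (\ref{eq:Imklim}) gives $\Imag k(\lambda)\to 0$ as $\lambda\to\lambda_0$ through the set $\{\lambda : \mathfrak D(\lambda)\notin[-2,2]\}$ on which $\gamma_\pm$ is defined, while $\Imag\lambda\to 0$ because $\lambda_0$ is real, so taking the $\liminf$ of the bound from (a) yields $e^{-am}$.

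I do not anticipate a serious obstacle. The only step requiring a moment of care is the Hermitian/anti-Hermitian split making $\frac{d}{dx}|u|^2$ independent of $\Real\lambda-q$, which is what pins down $\sqrt{m^2+(\Imag\lambda)^2}$ as the growth rate and explains why the periodic potential $q$ does not enter the exponential at all; everything downstream is routine bounding.
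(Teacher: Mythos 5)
Your proposal is correct and reaches exactly the same lower bound, but by a genuinely different decomposition than the paper's. The paper derives a differential inequality directly for $|\varphi_\pm|^2$, differentiating $\varphi_\pm = u_\pm e^{\pm i k(\lambda)x}$ and obtaining the constant matrix $\mathcal B_\pm(\lambda) = 2\begin{pmatrix}\mp\Imag k(\lambda) & m - i\Imag\lambda \\ m + i\Imag\lambda & \mp\Imag k(\lambda)\end{pmatrix}$ whose spectral radius is $2\Imag k(\lambda) + 2\sqrt{m^2 + (\Imag\lambda)^2}$; Gronwall then gives the bound in one step. You instead split the argument: first derive the $k$-independent Gronwall bound $|u_\pm(x)|\le|u_\pm(0)|e^{\sqrt{m^2+(\Imag\lambda)^2}\,|x|}$ from the Hermitian part $m\sigma_1 - (\Imag\lambda)\sigma_2$ of the coefficient matrix (whose norm follows cleanly from the Pauli anticommutation), and then insert the factor $e^{a\Imag k(\lambda)}$ separately via $|\varphi_\pm(x)| = |\rho(\lambda)|^{\mp x/a}|u_\pm(x)|$. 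This separation makes the roles of the ``local growth rate'' $\sqrt{m^2 + (\Imag\lambda)^2}$ and the ``Floquet amplification'' $e^{a\Imag k(\lambda)}$ transparent, and also makes visible that the bound on $\gamma_+$ could be tightened to $e^{-a\sqrt{m^2+(\Imag\lambda)^2}}$ without the $\Imag k$ factor, since $|\rho(\lambda)|^{-x/a}\le 1$ on $[0,a]$ — a small sharpening the paper's formulation does not expose. Part (b) is handled in essentially the same way in both proofs, via equation (\ref{eq:Imklim}). One minor point of rigour: the step $\frac d{dx}|u|^2 = u^*(A^*+A)u$ with $A = i(\lambda-q)\sigma_2 + m\sigma_1$ needs $q$ real-valued so that the $q$-dependent part is genuinely anti-Hermitian; you implicitly use this, and it is indeed part of the standing hypotheses.
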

\begin{proof}
(a)
The upper bound is immediate from the definition of $\gamma_\pm$. For the lower
bound, we note that $|\varphi_\pm|^2$ satisfies the differential equation
\begin{equation}
\frac d{d x} |\varphi_\pm(x,\lambda)|^2 = \varphi_\pm(x,\lambda)^T \mathcal B_\pm(\lambda) \overline{\varphi_\pm(x,\lambda)}
\qquad (x\in\mathbb R),
\label{eq:phinormde}
\end{equation}
with
\begin{equation*}
 \mathcal B_\pm(\lambda) = 2 \begin{pmatrix} \mp \Imag k(\lambda) & m - i \Imag \lambda \\ m + i \Imag \lambda & \mp \Imag k(\lambda) \end{pmatrix}.
\end{equation*}
Indeed, the Floquet solutions $u_\pm(\cdot,\lambda)$ are solutions of the differential
equation (\ref{eq:Dirac}), which can be rewritten in the form
\begin{equation*}
 u'(x,\lambda) = \begin{pmatrix} 0 & m - q(x) +\lambda \\ m + q(x) - \lambda & 0 \end{pmatrix} u(x,\lambda)
\qquad (x \in \mathbb R);
\end{equation*}
so by differentiation of
$\varphi_\pm(x,\lambda) = u_\pm(x,\lambda)\,e^{\pm i k(\lambda) x}$
$(x \in \mathbb R)$
we find that
$\varphi_\pm'(x,\lambda) = B_\pm(x,\lambda)\,\varphi_\pm(x,\lambda)$, where
\begin{equation*}
B_\pm(x,\lambda) = \begin{pmatrix} \pm i\,k(\lambda) & m - q(x) + \lambda \\ m + q(x) - \lambda & \pm i\,k(\lambda)\end{pmatrix} \qquad (x\in\mathbb R).
\end{equation*}
Hence
\begin{align*}
\frac d{d x}\,|\varphi_\pm(x,\lambda)|^2 &= \varphi_\pm(x,\lambda)^T\,\overline{\varphi_\pm'(x,\lambda)} + \varphi_\pm'(x,\lambda)^T\,\overline{\varphi_\pm(x,\lambda)}
\\
&= \varphi_\pm(x,\lambda)^T \left(\overline{B_\pm(x,\lambda)} + B_\pm(x,\lambda)^T\right) \overline{\varphi(x,\lambda)}
\end{align*}
and equation (\ref{eq:phinormde}) follows noting that
$\mathcal B_\pm(\lambda) = \overline{B_\pm(x,\lambda)} + B_\pm(x,\lambda)^T$
does not depend on $x$.
From (\ref{eq:phinormde}),
\begin{align*}
 \frac d{d x} |\varphi_\pm(x,\lambda)|^2 &\le \left| \frac d{d x} |\varphi_\pm(x,\lambda)|^2 \right|
= |\varphi_\pm(x,\lambda)^T \mathcal B_\pm(\lambda) \overline{\varphi_\pm(x, \lambda)}|
\\
&\le \|\mathcal B_\pm(\lambda)\|\,|\varphi_\pm(x,\lambda)|^2.
\end{align*}
To find the operator norm of the symmetric matrix $\mathcal B_\pm(\lambda)$,
we calculate its eigenvalues
$\mp 2 \Imag k(\lambda) + 2 \sqrt{m^2 + (\Imag \lambda)^2}$ and
$\mp 2 \Imag k(\lambda) - 2 \sqrt{m^2 + (\Imag \lambda)^2}$, and hence the
spectral radius
\begin{equation*}
 \|\mathcal B_\pm(\lambda)\| = 2 \Imag k(\lambda) + 2 \sqrt{m^2 + (\Imag \lambda)^2}.
\end{equation*}
Hence the above differential inequality gives
\begin{equation*}
|\varphi_\pm(x,\lambda)|^2 \le |\varphi_\pm(0,\lambda)|^2\,e^{(2 \Imag k(\lambda) + 2 \sqrt{m^2 + (\Imag \lambda)^2})\,x}
\qquad (x \in [0, a])
\end{equation*}
and so the lower bound in the Theorem.

\noindent
(b)
By part (a), we have
\begin{equation*}
\gamma_\pm(\lambda) - e^{-a\left(\Imag k(\lambda) + \sqrt{m^2 + (\Imag\lambda)^2}\right)} \ge 0
\end{equation*}
for all $\lambda\in\mathbb C$ such that $\mathfrak D(\lambda) \notin \{-2, 2\}$,
so using equation (\ref{eq:Imklim}), we find
\begin{align*}
 0 &\le \liminf_{\lambda\rightarrow\lambda_0} \left(\gamma_\pm(\lambda) - e^{-a\left(\Imag k(\lambda) + \sqrt{m^2 + (\Imag\lambda)^2} \right)}\right)
\\
&\le \liminf_{\lambda\rightarrow\lambda_0} \gamma_\pm(\lambda) - \lim_{\lambda\rightarrow\lambda_0} e^{-a\left(\Imag k(\lambda) + \sqrt{m^2 + (\Imag\lambda)^2}\right)}
= \liminf_{\lambda\rightarrow\lambda_0} \gamma_\pm(\lambda) - e^{-am}.
\end{align*}
\end{proof}
We now consider the function $\Gamma(M(\lambda))$,
which, as a composition of a continous and an entire function, is continuous.
By Lemma \ref{lem:monogam} and the definition of $\Gamma$, we see that $\Gamma(M(\lambda)) > 0$ for
all $\lambda\in\mathbb C$ for which $\mathfrak D(\lambda) \notin \{-2, 2\}$.
However, at the points where $\mathfrak D(\lambda) \in \{-2, 2\}$, $\Gamma(M(\lambda))$ is not defined. These points are the real values of $\lambda$ where either $M(\lambda) = \pm \mathbb I$ --- then $\mathfrak D'(\lambda) = 0$ and
$\lambda$ is an interior point of a spectral band where two instability
intervals meet --- or $M(\lambda) \neq \pm \mathbb I$ 
has eigenvalue $\pm 1$ with algebraic multiplicity 2, but geometric multiplicity 1
--- then $\mathfrak D'(\lambda) \neq 0$ and $\lambda$ is an end-point of a spectral band
(cf.\ \cite[Theorem 1.6.1]{BES}).
In the following we investigate the limiting behaviour of $\Gamma(M(\lambda))$ at these exceptional points.
We can characterise the derivative of $M$ at such points as follows.
Let $\lambda\in\mathbb R$. Denoting the columns of the canonical fundamental system $\Phi$ of
equation (\ref{eq:pD}) by $u$ and $v$, we have
\begin{equation}
 M'(\lambda)
 = M(\lambda) \begin{pmatrix} I_1(\lambda) & I_2(\lambda) \\ -I_3(\lambda) & -I_1(\lambda) \end{pmatrix}
\label{eq:Mdash}
\end{equation}
where
\begin{align*}
 I_1(\lambda) &:= \int_0^a (u_1(x,\lambda) v_1(x,\lambda) + u_2(x,\lambda) v_2(x,\lambda))\,d x, \\
 I_2(\lambda) &:= \int_0^a (v_1(x,\lambda)^2 + v_2(x,\lambda)^2)\,d x, \\
 I_3(\lambda) &:= \int_0^a (u_1(x,\lambda)^2 + u_2(x,\lambda)^2)\,d x
\end{align*}
(see \cite[eq. (1.6.4), (1.6.6)]{BES}).
Note that $I_1(\lambda)$, $I_2(\lambda)$ and $I_3(\lambda)$ are complex in
general; however, for real spectral parameter they are real and have the
following property.
\begin{lem}\label{lem:Iineq}
For any $\lambda\in\mathbb R$,
$I_1(\lambda)^2 < I_2(\lambda)\,I_3(\lambda)$.
In particular, $I_2(\lambda), I_3(\lambda) \neq 0$.
\end{lem}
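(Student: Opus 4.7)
The plan is to recognise the three quantities $I_1, I_2, I_3$ as the $L^2$ inner products of the columns of $\Phi(\cdot,\lambda)$ and then invoke the Cauchy--Schwarz inequality, with the strictness following from the non-vanishing Wronskian.

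First I would observe that for real $\lambda$, the coefficient matrix of the Dirac equation (\ref{eq:pD}) is real and the initial condition $\Phi(0,\lambda) = \mathbb I$ is real, so by uniqueness of solutions the canonical fundamental system $\Phi(\cdot,\lambda)$ is real-valued. Hence its columns $u(\cdot,\lambda)$ and $v(\cdot,\lambda)$ are continuous $\mathbb R^2$-valued functions on $[0,a]$. Viewing them as elements of the real Hilbert space $L^2([0,a])^2$ with inner product $\langle f, g\rangle = \int_0^a (f_1 g_1 + f_2 g_2)\,dx$, we have exactly
\begin{equation*}
I_1(\lambda) = \langle u, v\rangle, \qquad I_2(\lambda) = \|v\|^2, \qquad I_3(\lambda) = \|u\|^2.
\end{equation*}

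Next I would apply the Cauchy--Schwarz inequality, which immediately gives $I_1(\lambda)^2 \le I_2(\lambda)\,I_3(\lambda)$. The key step is then to rule out equality. Equality in Cauchy--Schwarz would force $u(\cdot,\lambda)$ and $v(\cdot,\lambda)$ to be linearly dependent as elements of $L^2([0,a])^2$, and since both are continuous, this would mean either that one of them vanishes identically on $[0,a]$ or that there exists a constant $c\in\mathbb R$ with $u(x,\lambda) = c\,v(x,\lambda)$ for all $x\in[0,a]$. In either case we would have $\det(u(x,\lambda), v(x,\lambda)) = 0$ for some (indeed all) $x\in[0,a]$. But $\det(u(x,\lambda), v(x,\lambda)) = \det\Phi(x,\lambda)$, which by the Wronskian identity for the Dirac equation equals $\det\Phi(0,\lambda) = 1$. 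This contradiction establishes the strict inequality $I_1(\lambda)^2 < I_2(\lambda)\,I_3(\lambda)$.

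Finally, the ``in particular'' claim is immediate: since $I_1(\lambda)^2 \ge 0$, the strict inequality forces $I_2(\lambda)\,I_3(\lambda) > 0$, and hence both $I_2(\lambda)$ and $I_3(\lambda)$ are non-zero (in fact strictly positive, as integrals of non-negative continuous functions). I do not anticipate any serious obstacle; the only point requiring care is the verification that $u$ and $v$ are real and continuous so that pointwise and $L^2$ linear dependence coincide, and that the Wronskian identity rules out the equality case cleanly.
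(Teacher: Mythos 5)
Your proposal is correct and follows essentially the same route as the paper: both observe that for real $\lambda$ the columns $u,v$ of $\Phi$ are real continuous functions, apply Cauchy--Schwarz in $L^2(0,a)^2$, and rule out equality via linear independence of $u$ and $v$ (the paper points to the initial values $u(0)=(1,0)^T$, $v(0)=(0,1)^T$; you invoke the Wronskian identity, which is the same fact). The minor extra care you take in noting that $L^2$-linear dependence of continuous functions implies pointwise dependence is sound and does not change the argument.
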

\begin{proof}
Since $\lambda\in\mathbb R$, $u$ and $v$ are $\mathbb R^2$-valued continuous functions.
The Cauchy-Schwarz inequality in $L^2(0, a)^2$ then gives
\begin{align*}
I_1(\lambda)^2 &= \left(\int_0^a u(x,\lambda)^T v(x,\lambda)\,d x\right)^2
\\
&\le \int_0^a |u(x,\lambda)|^2\,d x \int_0^a |v(x,\lambda)|^2\,d x
= I_2(\lambda)\,I_3(\lambda)
\end{align*}
with equality if and only if
$u$ and $v$ are linearly dependent functions; however, the latter is impossible
as $u(0,\lambda) = \begin{pmatrix} 1 \\ 0 \end{pmatrix}$ and $v(0,\lambda) = \begin{pmatrix} 0 \\ 1 \end{pmatrix}$.
\end{proof}
We can now find the limit of $\Gamma(M(\lambda))$ at a point $\lambda_0\in\mathbb R$ where
$\mathfrak D(\lambda_0) = \pm 2$, distinguishing the cases of
$M(\lambda_0) = \pm \mathbb I$ and of $M(\lambda_0) \neq \pm \mathbb I$.
Note that in the following theorems \ref{thm:MisI} and \ref{thm:MisnotI},
the limits $\lambda \rightarrow \lambda_0$ allow complex $\lambda$.
\begin{thm}\label{thm:MisI}
Let $\lambda_0\in\mathbb R$ be such that $M(\lambda_0) = s \mathbb I$, where
$s\in\{-1,1\}$. Then
\begin{equation*}
\lim_{\lambda\rightarrow\lambda_0} \Gamma(M(\lambda)) = 2\,\frac{\sqrt{I_2(\lambda_0)\,I_3(\lambda_0) - I_1(\lambda_0)^2}}{I_2(\lambda_0) + I_3(\lambda_0)}
> 0.
\end{equation*}
\end{thm}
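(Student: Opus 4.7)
The plan is to factor out the scalar behaviour of $M$ at $\lambda_0$. Since $M$ is entire and $M(\lambda_0) = s\,\mathbb I$, the matrix-valued function
\begin{equation*}
 N(\lambda) := \frac{M(\lambda) - s\,\mathbb I}{\lambda - \lambda_0} \qquad (\lambda\neq\lambda_0)
\end{equation*}
extends analytically to $\lambda_0$ with value $N(\lambda_0) = M'(\lambda_0) = s N_0$, where by equation (\ref{eq:Mdash})
\begin{equation*}
 N_0 := \begin{pmatrix} I_1(\lambda_0) & I_2(\lambda_0) \\ -I_3(\lambda_0) & -I_1(\lambda_0) \end{pmatrix}.
\end{equation*}
The identity $M(\lambda) = s\,\mathbb I + (\lambda-\lambda_0)\,N(\lambda)$ shows that $M(\lambda)$ and $N(\lambda)$ share eigenvectors for $\lambda\neq\lambda_0$, so $\Gamma(M(\lambda)) = \Gamma(N(\lambda))$ whenever both are defined.

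Next I transfer the limit to $\lambda_0$ using continuity of $\Gamma$. The matrix $N_0$ has trace $0$ and determinant $\nu^2$ with $\nu := \sqrt{I_2(\lambda_0)\,I_3(\lambda_0) - I_1(\lambda_0)^2}$, which is strictly positive by Lemma \ref{lem:Iineq}; its eigenvalues $\pm i\nu$ are therefore distinct, so $sN_0 \in D(\Gamma)$. Since $D(\Gamma)$ is open and $N$ is continuous, $N(\lambda) \in D(\Gamma)$ in a full neighbourhood of $\lambda_0$; for $\lambda\neq\lambda_0$ this is equivalent to $M(\lambda) \in D(\Gamma)$, so the limit below is taken through a relatively open dense set in a punctured neighbourhood. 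By continuity of $\Gamma$ on $D(\Gamma)$,
\begin{equation*}
 \lim_{\lambda\to\lambda_0} \Gamma(M(\lambda)) = \lim_{\lambda\to\lambda_0} \Gamma(N(\lambda)) = \Gamma(s N_0) = \Gamma(N_0),
\end{equation*}
the last equality using that scaling by $s = \pm 1$ preserves eigenvectors.

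It remains to compute $\Gamma(N_0)$ explicitly. A direct calculation verifies that $v_\pm := \bigl(\begin{smallmatrix} I_2(\lambda_0) \\ \pm i\nu - I_1(\lambda_0) \end{smallmatrix}\bigr)$ are eigenvectors of $N_0$ for the eigenvalues $\pm i\nu$, and using $\nu^2 + I_1(\lambda_0)^2 = I_2(\lambda_0)\,I_3(\lambda_0)$ together with the reality of $I_1, I_2, I_3$ yields
\begin{equation*}
 \det(v_+, v_-) = -2i\nu\,I_2(\lambda_0), \qquad |v_\pm|^2 = I_2(\lambda_0)\,\bigl(I_2(\lambda_0) + I_3(\lambda_0)\bigr).
\end{equation*}
Substitution into (\ref{eq:Gammadef}) gives the claimed formula, and positivity follows from $\nu > 0$ and $I_2(\lambda_0), I_3(\lambda_0) > 0$. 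The only delicate point is that $\Gamma$ has no continuous extension to scalar matrices (as the paper's counterexample at $\mathbb I$ shows); the factorisation $M - s\mathbb I = (\lambda - \lambda_0)\,N$ is precisely the device that bypasses this obstruction by moving the limit to a genuine point of $D(\Gamma)$.
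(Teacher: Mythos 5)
Your proof is correct and takes essentially the same route as the paper: both factor out the scalar part by passing to the difference quotient $N(\lambda) = (M(\lambda)-s\mathbb I)/(\lambda-\lambda_0)$, observe that $M(\lambda)$ and $N(\lambda)$ share eigenvectors for $\lambda\neq\lambda_0$ so $\Gamma(M(\lambda))=\Gamma(N(\lambda))$, and then evaluate at $N(\lambda_0)=M'(\lambda_0)$. The only cosmetic difference is that you invoke the continuity of $\Gamma$ on the open set $D(\Gamma)$ (which the paper asserts earlier) to pass to the limit, whereas the paper justifies the same step by citing Kato's analytic perturbation theorem directly.
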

\begin{proof}
Since $M$ is entire, we have by equation (\ref{eq:Mdash}) for $\lambda\in\mathbb C$,
abbreviating $I_j := I_j(\lambda_0)$, $j\in\{1,2,3\}$,
\begin{align*}
 M(\lambda) &= M(\lambda_0) + M'(\lambda_0)\,(\lambda - \lambda_0) + R(\lambda - \lambda_0) \\
&= s \mathbb I + s \mathbb I \begin{pmatrix} I_1 & I_2 \\ -I_3 & -I_1 \end{pmatrix} (\lambda - \lambda_0) + R(\lambda - \lambda_0) \\
&= s \mathbb I + s\,(\lambda - \lambda_0)\,N(\lambda - \lambda_0),
\end{align*}
where
\begin{equation*}
N(\Lambda) := \begin{pmatrix} I_1 & I_2 \\ -I_3 & -I_1 \end{pmatrix} + \frac{R(\Lambda)}\Lambda \qquad (\Lambda\in\mathbb C).
\end{equation*}
Here $R(\Lambda)/\Lambda$ is analytic with $\lim\limits_{\Lambda\rightarrow 0} R(\Lambda)/\Lambda = 0$.
Clearly, $w\in\mathbb C^2$ is an eigenvector of $N(\lambda - \lambda_0)$ for eigenvalue $\mu\in\mathbb C$ if and only if it is an eigenvector of $M(\lambda)$ for eigenvalue $s\,(1+ (\lambda - \lambda_0)\,\mu)$.
Therefore $\Gamma(M(\lambda)) = \Gamma(N(\lambda - \lambda_0))$ and we only
need to find $\lim\limits_{\Lambda\rightarrow 0} \Gamma(N(\Lambda))$.

Using Lemma \ref{lem:Iineq}, we see that the matrix
$N(0) = \begin{pmatrix} I_1 & I_2 \\ -I_3 & -I_1 \end{pmatrix}$ has distinct
purely imaginary eigenvalues $\pm i \sqrt{I_2 I_3 - I_1^2}$ with corresponding
eigenvectors
\begin{equation*}
w_+ = \begin{pmatrix} I_2 \\ -I_1 + i \sqrt{I_2 I_3 - I_1^2} \end{pmatrix},
\qquad
w_- = \begin{pmatrix} -I_1 + i \sqrt{I_2 I_3 - I_1^2} \\ I_3 \end{pmatrix}.
\end{equation*}
Hence by equation (\ref{eq:Gammadef})
\begin{equation*}
\Gamma(N(0))^2 = \frac{|2(I_2 I_3 - I_1^2 + i I_1 \sqrt{I_2 I_3 - i_1^2})|^2}{I_2\,(I_2 + I_3)\,I_3\,(I_2 + I_3)}
 = 4\,\frac{I_2 I_3 - I_1^2}{(I_2 + I_3)^2}.
\end{equation*}
By analytic perturbation theory (see \cite[Theorem II.1.8]{Kato}), the
eigenspaces of $N(\Lambda)$ converge to those spanned by $w_+$ and $w_-$ as
$\Lambda\rightarrow 0$, and we conclude that
\begin{equation*}
\lim_{\lambda\rightarrow\lambda_0} \Gamma(M(\lambda)) = \lim_{\Lambda\rightarrow 0} \Gamma(N(\Lambda))
= \Gamma(N(0)) = 2\,\frac{\sqrt{I_2 I_3 - I_1^2}}{I_2 + I_3},
\end{equation*}
which is positive by Lemma \ref{lem:Iineq}.
\end{proof}
\begin{thm}\label{thm:MisnotI}
Let $\lambda_0\in\mathbb R$ be such that $\mathfrak D(\lambda_0) = \pm 2$, but
$M(\lambda_0) \neq \pm\mathbb I$. Then
\begin{equation*}
\lim_{\lambda\rightarrow\lambda_0} \Gamma(M(\lambda)) = 0.
\end{equation*}
\end{thm}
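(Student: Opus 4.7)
The plan is to exploit the structure behind the colliding Floquet multipliers: as $\lambda \to \lambda_0$, the two eigenvalues of $M(\lambda)$ coalesce at the common value $s \in \{-1,1\}$ with $\mathfrak{D}(\lambda_0) = 2s$, while the hypothesis $M(\lambda_0) \neq s\mathbb{I}$ forces $\ker(M(\lambda_0) - s\mathbb{I})$ to be one-dimensional. Hence the corresponding eigenvectors $v_\pm(\lambda)$ must collapse onto a common direction, so $|\det(v_+,v_-)|$ vanishes while $|v_+|\,|v_-|$ stays bounded away from zero, yielding $\Gamma(M(\lambda)) \to 0$.

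First, I would write $M(\lambda) = \bigl(\begin{smallmatrix} a(\lambda) & b(\lambda) \\ c(\lambda) & d(\lambda)\end{smallmatrix}\bigr)$ with entire entries. From $a(\lambda_0) + d(\lambda_0) = 2s$ and $a(\lambda_0)d(\lambda_0) - b(\lambda_0)c(\lambda_0) = 1$, a quick argument by contradiction shows that $(b(\lambda_0), c(\lambda_0)) \neq (0,0)$: if both vanished, $a(\lambda_0)$ and $d(\lambda_0)$ would be roots of $x^2 - 2sx + 1 = 0$, forcing $a(\lambda_0) = d(\lambda_0) = s$ and so $M(\lambda_0) = s\mathbb{I}$, a contradiction. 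Without loss of generality I would then take $b(\lambda_0) \neq 0$ (the case $c(\lambda_0) \neq 0$ is entirely analogous with rows and columns swapped), and by continuity $b(\lambda) \neq 0$ throughout some neighbourhood of $\lambda_0$.

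In that neighbourhood, for either eigenvalue $\mu$ of $M(\lambda)$ a valid eigenvector is $\bigl(\begin{smallmatrix} b(\lambda) \\ \mu - a(\lambda)\end{smallmatrix}\bigr)$. Applying this to the two Floquet multipliers $\mu_\pm(\lambda) = \tfrac{1}{2}(\mathfrak{D}(\lambda) \pm \sqrt{\mathfrak{D}(\lambda)^2 - 4})$ (with any fixed branch of the root near $\lambda_0$), I would compute
\begin{equation*}
\det(v_+(\lambda), v_-(\lambda)) = b(\lambda)\bigl(\mu_-(\lambda) - \mu_+(\lambda)\bigr),
\end{equation*}
so $|\det(v_+, v_-)| = |b(\lambda)|\,|\mathfrak{D}(\lambda)^2 - 4|^{1/2}$, independent of the branch choice. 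Meanwhile $|v_\pm(\lambda)|^2 = |b(\lambda)|^2 + |\mu_\pm(\lambda) - a(\lambda)|^2$ depends continuously on $\lambda$, and since $\mu_\pm(\lambda) \to s$ as $\lambda \to \lambda_0$,
\begin{equation*}
|v_+(\lambda)|\,|v_-(\lambda)| \longrightarrow |b(\lambda_0)|^2 + |s - a(\lambda_0)|^2 \ge |b(\lambda_0)|^2 > 0.
\end{equation*}
Putting the pieces together, $\Gamma(M(\lambda)) = |b(\lambda)|\,|\mathfrak{D}(\lambda)^2 - 4|^{1/2}/(|v_+||v_-|) \to 0$ as $\lambda \to \lambda_0$, using that $\mathfrak{D}(\lambda)^2 - 4 \to 0$ and that the denominator tends to a strictly positive limit.

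There is no serious obstacle here; the main technical care is in choosing the eigenvector parametrisation uniformly in a neighbourhood of $\lambda_0$ (which is why I split into the two cases according to whether $b$ or $c$ is nonzero at $\lambda_0$) and in noting that the absolute value $|\mu_+ - \mu_-|$ is well-defined without committing to a branch of the square root, so the computation remains valid for complex $\lambda$ approaching $\lambda_0$ from any direction.
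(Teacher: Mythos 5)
Your proposal is correct and follows essentially the same route as the paper: showing $b(\lambda_0)$ and $c(\lambda_0)$ cannot both vanish, then (WLOG $b(\lambda_0)\neq 0$) using the explicit eigenvector parametrisation $v_\pm = (b, \mu_\pm - a)^T$ to compute $|\det(v_+,v_-)| = |b|\,|\mathfrak D^2-4|^{1/2}$ and bound the denominator away from zero. The paper uses the uniform inequality $|v_\pm(\lambda)|\ge |b(\lambda)|$ rather than passing to the exact limit of $|v_+||v_-|$, but this is only a cosmetic difference.
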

\begin{proof}
We can write the monodromy matrix as
\begin{equation*}
M(\lambda) = \begin{pmatrix} a(\lambda) & b(\lambda) \\ c(\lambda) & \mathfrak D(\lambda) - a(\lambda) \end{pmatrix} \qquad (\lambda\in\mathbb C)
\end{equation*}
with entire functions $a, b, c$ (and $\mathfrak D$).
As the (Wronskian) $\det M(\lambda) = 1$ for all $\lambda$, we find
\begin{equation}
c b = a \mathfrak D - a^2 - 1.
\label{eq:cb}
\end{equation}
Therefore, if $b(\lambda_0) = c(\lambda_0) = 0$, then
$a(\lambda_0) = \mathfrak D(\lambda_0)/2$ and hence $M(\lambda_0) = \pm\mathbb I$,
contradicting the hypotheses.
So $b(\lambda_0) \neq 0$ or $c(\lambda_0) \neq 0$.

We first consider the case $b(\lambda_0) \neq 0$. Then $b \neq 0$ in a
neighbourhood of $\lambda_0$. For $\lambda$ in this neighbourhood, we can write,
using equation (\ref{eq:cb}),
\begin{equation*}
M(\lambda) = \begin{pmatrix} a(\lambda) & b(\lambda) \\ \frac{a(\lambda)\mathfrak D(\lambda) - a(\lambda)^2 - 1}{b(\lambda)} & \mathfrak D(\lambda) - a(\lambda) \end{pmatrix},
\end{equation*}
with eigenvalues
$\frac{\mathfrak D(\lambda) \pm \sqrt{\mathfrak D(\lambda)^2 - 4}}2$ and
eigenvectors
\begin{equation*}
w_+(\lambda) = \begin{pmatrix} b(\lambda) \\ \frac{\mathfrak D(\lambda) + \sqrt{\mathfrak D(\lambda)^2 - 4}}2 - a(\lambda) \end{pmatrix}, \qquad
w_-(\lambda) = \begin{pmatrix} b(\lambda) \\ \frac{\mathfrak D(\lambda) - \sqrt{\mathfrak D(\lambda)^2 - 4}}2 - a(\lambda) \end{pmatrix}.
\end{equation*}
Then $|w_\pm(\lambda)| \ge |b(\lambda)|$ and, by equation (\ref{eq:Gammadef}),
\begin{equation*}
\Gamma(M(\lambda)) \le \frac{|-b(\lambda) \sqrt{\mathfrak D(\lambda)^2 - 4}|}{|b(\lambda)|^2} = \frac{|\sqrt{\mathfrak D(\lambda)^2 - 4}|}{|b(\lambda)|}
\rightarrow 0 \qquad (\lambda\rightarrow\lambda_0).
\end{equation*}
If $b(\lambda_0) = 0$, then $c \neq 0$ in a neighbourhood of $\lambda_0$, and
for $\lambda$ in this neighbourhood we can write
\begin{equation*}
M(\lambda) = \begin{pmatrix} a(\lambda) & \frac{a(\lambda)\mathfrak D(\lambda) - a(\lambda)^2 - 1}{c(\lambda)} \\ c(\lambda) & \mathfrak D(\lambda) - a(\lambda) \end{pmatrix};
\end{equation*}
this matrix has the same eigenvalues as above and eigenvectors
\begin{equation*}
w_\pm(\lambda) = \begin{pmatrix} \frac{\mathfrak D(\lambda) \pm \sqrt{\mathfrak D(\lambda)^2 - 4}}2 - \mathfrak D(\lambda) + a(\lambda) \\ c(\lambda) \end{pmatrix},
\end{equation*}
so
$|w_\pm(\lambda)| \ge |c(\lambda)|$ and
\begin{equation*}
\Gamma(M(\lambda)) \le \frac{|c(\lambda) \sqrt{\mathfrak D(\lambda)^2 - 4}|}{|c(\lambda)|^2}
= \frac{|\sqrt{\mathfrak D(\lambda)^2 - 4}|}{|c(\lambda)|}
\rightarrow 0 \qquad (\lambda\rightarrow\lambda_0).
\end{equation*}
\end{proof}

\section{Asymptotics for large $\Imag\lambda$}\label{sec:far}
The results of the preceding section show that the functions $\gamma_\pm$ do
not tend to zero at any point in the complex plane and that $\Gamma\circ M$
tends to zero only (and exactly) at the end-points of the spectral bands.
However, they do not yet preclude the possibility that these functions become
small for $\lambda$ far away from the real axis; in fact, the lower bound in
Theorem \ref{thm:gammabound} (a) tends to zero as $|\Imag\lambda| \rightarrow \infty$
and hence is not very good in this respect.
In the present section, we show that in fact $\Gamma(M(\lambda))$ and
$\gamma_\pm(\lambda)$ tend to 1 as $|\Imag\lambda| \rightarrow \infty$, which
implies that the level sets of $\Gamma(M(\lambda)) \gamma_+(\lambda) \gamma_-(\lambda)$ are located in strip neighbourhoods of the real axis.
The basis for this is provided by the following asymptotic of the canonical
fundamental system of a Dirac system with real-valued potential on a bounded
interval; this result may be of interest in its own right.

We focus on the case $\Imag\lambda > 0$, as the asymptotics
for $\Imag\lambda \rightarrow -\infty$ are the same due to the symmetry of
the Dirac equation (\ref{eq:pD}) with real-valued $q$.
\begin{thm}\label{thm:Philim}
Let $q\in L^1[0,a]$ be real-valued and $m \ge 0$. Let
$Q(x) = \int_0^x q$ $(x\in [0,a])$ and let $\mu\in\mathbb R$, $\alpha > 0$.
Then, for each $x\in [0,a]$, the solution of the initial value problem
$(\ref{eq:pD})$ with $\lambda = \mu + i \alpha$ has the asymptotic
for $\alpha\rightarrow\infty$
\begin{align}
e^{-\alpha x}\,\Phi(x,\mu &+ i \alpha) = \frac 12\left(e^{i(Q(x) - \mu x)} + e^{-2\alpha x} e^{-i(Q(x) - \mu x)}\right)\mathbb I
\nonumber
\\
&\ + \frac12\left(-e^{i(Q(x) - \mu x)} + e^{-2\alpha x} e^{-i(Q(x) - \mu x)}\right) \sigma_2
+ O_{\rm unif}({\textstyle \frac 1\alpha}).
\label{eq:Phiasymp}
\end{align}
\end{thm}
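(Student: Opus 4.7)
The strategy is to pass to the eigenbasis of $\sigma_2$, which diagonalises the leading-order matrix as $\alpha \to \infty$, and to close a Volterra iteration driven by the small residual coupling. Writing the Dirac equation in the first-order form already derived earlier in the paper,
\begin{equation*}
u'(x) = \begin{pmatrix} 0 & m - q(x) + \lambda \\ m + q(x) - \lambda & 0 \end{pmatrix} u(x),
\end{equation*}
I change variables to $w_\pm(x) := \tfrac12(u_1(x) \pm i\,u_2(x))$. A direct computation using $u_1 = w_+ + w_-$ and $u_2 = i(w_- - w_+)$ yields the coupled pair
\begin{equation*}
w_+' = -i(\lambda - q)\,w_+ + im\,w_-, \qquad w_-' = -im\,w_+ + i(\lambda - q)\,w_-.
\end{equation*}
With $\lambda = \mu + i\alpha$ we have $-i(\lambda - q) = \alpha + i(q - \mu)$, so $w_+$ carries a growing factor $e^{\alpha x + i(Q(x) - \mu x)}$ and $w_-$ a decaying one; I take this to be the referenced separated system.

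Next I factor these leading exponentials out by setting $\tilde w_+(x) := e^{-\alpha x - i(Q(x) - \mu x)} w_+(x)$ and $\tilde w_-(x) := e^{+\alpha x + i(Q(x) - \mu x)} w_-(x)$, both of which are constant in $x$ when $m = 0$. A short calculation produces the coupled Volterra system
\begin{align*}
\tilde w_+(x) &= \tilde w_+(0) + im \int_0^x e^{-2\alpha t - 2i(Q(t) - \mu t)}\,\tilde w_-(t)\,dt, \\
\tilde w_-(x) &= \tilde w_-(0) - im \int_0^x e^{+2\alpha s + 2i(Q(s) - \mu s)}\,\tilde w_+(s)\,ds.
\end{align*}
Substituting the second identity into the first eliminates $\tilde w_-$ and yields a closed Volterra equation for $\tilde w_+$ whose kernel $K(x,s) := \int_s^x e^{-2\alpha(t - s)} e^{-2i(Q(t) - Q(s)) + 2i\mu(t - s)}\,dt$ satisfies $|K(x,s)| \le 1/(2\alpha)$ uniformly in $0 \le s \le x \le a$. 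A Picard/Gronwall argument on the finite interval $[0,a]$ gives a uniform bound $\|\tilde w_+\|_\infty \le C_0$ depending only on $|\tilde w_\pm(0)|$, $m$, $a$ (and in particular not on $\alpha \ge 1$), and inserting this bound back into the closed Volterra identity then yields $\tilde w_+(x) = \tilde w_+(0) + O_{\rm unif}(1/\alpha)$ on $[0,a]$.

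The companion quantity $\tilde w_-$ admits no analogous uniform bound --- the estimate above gives only $|\tilde w_-(x)| = O(e^{2\alpha x}/\alpha)$. The decisive observation is that what enters $e^{-\alpha x}\Phi(x)$ is not $\tilde w_-$ but
\begin{equation*}
e^{-\alpha x} w_+(x) = e^{i(Q(x) - \mu x)}\,\tilde w_+(x), \qquad e^{-\alpha x} w_-(x) = e^{-2\alpha x - i(Q(x) - \mu x)}\,\tilde w_-(x),
\end{equation*}
and inserting the Volterra identity for $\tilde w_-$ into the second expression gives
\begin{equation*}
\bigl|e^{-\alpha x} w_-(x) - e^{-2\alpha x - i(Q(x) - \mu x)}\,\tilde w_-(0)\bigr| \le m\,C_0\,e^{-2\alpha x}\!\int_0^x e^{2\alpha t}\,dt \le \frac{m\,C_0}{2\alpha},
\end{equation*}
uniformly in $x \in [0,a]$. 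Applying these two identities to the two canonical initial conditions of $\Phi$ --- column 1: $\tilde w_+(0) = \tilde w_-(0) = \tfrac12$; column 2: $\tilde w_+(0) = \tfrac i2$, $\tilde w_-(0) = -\tfrac i2$ --- and reassembling the columns via $u_1 = w_+ + w_-$, $u_2 = i(w_- - w_+)$, I recognise $\tfrac12(\mathbb I \mp \sigma_2)$ as the spectral projections of $\sigma_2$ for the eigenvalues $\mp 1$; a short algebraic simplification then produces precisely the right-hand side of (\ref{eq:Phiasymp}).

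The main obstacle is the asymmetry just described. Because $q$ is only assumed to lie in $L^1[0,a]$, no smoothness is available and no integration by parts or oscillatory cancellation in the kernel $e^{\pm 2\alpha t + \cdots}$ can be exploited, so $\tilde w_-$ genuinely fails to remain bounded as $\alpha \to \infty$. The argument works only because one never needs a bound on $\tilde w_-$ itself --- only on $e^{-\alpha x} w_-(x)$ --- and this comes essentially for free from the decay of $e^{-2\alpha(x - t)}$ in the relevant integral, requiring no regularity of $q$ beyond $q \in L^1$.
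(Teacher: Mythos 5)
Your proof is correct, and it delivers the same leading-order matrix and the same $O_{\mathrm{unif}}(1/\alpha)$ error under the same minimal hypothesis $q\in L^1[0,a]$. The strategy is the same at a structural level — diagonalise the dominant ($m=0$) dynamics, factor out the resulting exponentials, and close a Volterra iteration, exploiting that the growing amplitude only ever appears multiplied by the compensating prefactor $e^{-2\alpha x}$ — but the concrete change of variables is genuinely different. The paper works at the level of the matrix $\Phi$ itself, decomposing it in the Pauli basis $\Phi=\sum_{j=0}^3\sigma_j\phi_j$; this splits the equation into two coupled $2\times 2$ systems for $(\phi_0,\phi_2)$ and $(\phi_1,\phi_3)$ (equation (\ref{eq:coupsys})), and the ansatz (\ref{eq:ansatz}) then diagonalises each pair, leading to the two instances of the separated system (\ref{eq:sepsys}), which are bounded via an integration by parts recursion. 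You instead work column by column, passing directly to the $\sigma_2$-eigenbasis $w_\pm=\tfrac12(u_1\pm i u_2)$, eliminating $\tilde w_-$ to get a single closed Volterra equation for $\tilde w_+$ with kernel bound $|K(x,s)|\le 1/(2\alpha)$, and then applying Gronwall. The trade-off is roughly this: the paper's Pauli decomposition makes the $\mathbb I,\sigma_2$ structure of the limit emerge automatically and treats all four matrix entries at once, at the cost of the slightly more elaborate ansatz, whereas your column-wise approach is shorter to set up and the Gronwall closure is arguably cleaner than the integration-by-parts bootstrap, at the cost of having to reassemble the $\tfrac12(\mathbb I\mp\sigma_2)$ projections by hand at the end. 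Your remark that the asymmetry between $\tilde w_+$ and $\tilde w_-$ is harmless precisely because only $e^{-\alpha x}w_-$ enters the target quantity is the same decisive observation the paper relies on, and you correctly identify that no oscillatory cancellation or smoothness of $q$ is needed.
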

\noindent
Here $O_{\rm unif}$ means that the bound is uniform in $x\in [0,a]$.
\begin{cor}\label{cor:Mlim}
Under the hypotheses of Theorem \ref{thm:Philim},
\begin{equation*}
e^{-\alpha x}\,\Phi(x,\mu + i \alpha) = \frac 12\,e^{i(Q(x) - \mu x)}\,(\mathbb I - \sigma_2) + O({\textstyle \frac 1\alpha})
\qquad (\alpha\rightarrow\infty)
\end{equation*}
for each $x \in (0, a]$.
\end{cor}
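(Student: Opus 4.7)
The plan is to derive Corollary \ref{cor:Mlim} directly from the asymptotic expansion in Theorem \ref{thm:Philim} by algebraic regrouping and by exploiting the exponential decay of $e^{-2\alpha x}$ for $x$ bounded away from $0$.

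First I would rewrite the right-hand side of equation (\ref{eq:Phiasymp}) by collecting terms according to the two exponential factors $e^{i(Q(x)-\mu x)}$ and $e^{-2\alpha x}e^{-i(Q(x)-\mu x)}$. A short computation gives
\begin{equation*}
e^{-\alpha x}\Phi(x,\mu+i\alpha) = \tfrac12\,e^{i(Q(x)-\mu x)}(\mathbb{I}-\sigma_2) + \tfrac12\,e^{-2\alpha x}e^{-i(Q(x)-\mu x)}(\mathbb{I}+\sigma_2) + O_{\rm unif}(\tfrac1\alpha),
\end{equation*}
so the leading term already has the form claimed by the corollary, and the only remaining task is to absorb the middle term into the error.

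Next I would fix $x\in(0,a]$ and observe that $e^{-2\alpha x}$ decays exponentially in $\alpha$; in particular, since $\alpha\,e^{-2\alpha x}\to 0$ as $\alpha\to\infty$, we have $e^{-2\alpha x}=O(1/\alpha)$. Because $e^{-i(Q(x)-\mu x)}(\mathbb{I}+\sigma_2)$ is bounded in $x$ and $\alpha$, the second summand is $O(1/\alpha)$ and can be merged into the error term. This yields exactly the stated formula.

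There is no real obstacle here; the one subtlety worth flagging is that the error in the corollary is pointwise in $x$, not uniform, since the bound $e^{-2\alpha x}=O(1/\alpha)$ degenerates as $x\to 0^+$. This is why the corollary excludes $x=0$ (where in fact $e^{-\alpha\cdot 0}\Phi(0,\lambda)=\mathbb{I}\neq \tfrac12(\mathbb{I}-\sigma_2)$) and explains the change from $O_{\rm unif}$ in the theorem to the pointwise $O$ in the corollary.
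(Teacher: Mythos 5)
Your proof is correct and is precisely the intended derivation: regroup equation (\ref{eq:Phiasymp}) into $\tfrac12 e^{i(Q(x)-\mu x)}(\mathbb I-\sigma_2) + \tfrac12 e^{-2\alpha x}e^{-i(Q(x)-\mu x)}(\mathbb I+\sigma_2) + O_{\rm unif}(\tfrac1\alpha)$, then absorb the second term into the error since $e^{-2\alpha x}=O(\tfrac1\alpha)$ for fixed $x>0$. The paper leaves the corollary without a written proof, treating it as immediate from Theorem \ref{thm:Philim}, and your remark on why the uniformity is lost and why $x=0$ must be excluded is exactly the right point to make explicit.
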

\begin{proof}[Proof of Theorem \ref{thm:Philim}]
Write $\Phi = \sum\limits_{j=0}^3 \sigma_j\,\phi_j$ with complex-valued
functions $\phi_0$, $\phi_1$, $\phi_2$, $\phi_3$ and the Pauli matrices
\begin{equation}
\sigma_1 = \begin{pmatrix} 0 & 1 \\ 1 & 0 \end{pmatrix}, \quad
\sigma_2 = \begin{pmatrix} 0 & -i \\ i & 0 \end{pmatrix}, \quad
\sigma_3 = \begin{pmatrix} 1 & 0 \\ 0 & -1 \end{pmatrix}, \quad
\sigma_0 =
\begin{pmatrix} 1 & 0 \\ 0 & 1 \end{pmatrix}
= \mathbb I.
\label{eq:Pauli}
\end{equation}
Then the initial value problem (\ref{eq:pD}) is
equivalent to the system
\begin{align}
\begin{pmatrix} \phi_0 \\ \phi_2 \end{pmatrix}' &= (-\alpha - i q + i \mu) \sigma_1 \begin{pmatrix} \phi_0 \\ \phi_2 \end{pmatrix} + m \begin{pmatrix} 1 & 0 \\ 0 & -i \end{pmatrix} \begin{pmatrix} \phi_1 \\ \phi_3 \end{pmatrix},
\nonumber
\\
\begin{pmatrix} \phi_1 \\ \phi_3 \end{pmatrix}' &= (-\alpha - i q + i \mu) (-\sigma_2) \begin{pmatrix} \phi_1 \\ \phi_3 \end{pmatrix} + m \begin{pmatrix} 1 & 0 \\ 0 & i \end{pmatrix} \begin{pmatrix} \phi_0 \\ \phi_2 \end{pmatrix},
\label{eq:coupsys}
\end{align}
with initial values $\phi_0(0) = 1$, $\phi_1(0) = \phi_2(0) = \phi_3(0) = 0$.
We now make the ansatz
\begin{equation}
\begin{pmatrix} \phi_0 \\ \phi_2 \end{pmatrix} = \frac{e^r}2 \begin{pmatrix} 1 \\ 1 \end{pmatrix} u_1 + \frac{e^{-r}}2 \begin{pmatrix} 1 \\ -1 \end{pmatrix} u_2,
\quad
\begin{pmatrix} \phi_1 \\ \phi_3 \end{pmatrix} = \frac{e^r}2 \begin{pmatrix} 1 \\ -i \end{pmatrix} u_3 + \frac{e^{-r}}2 \begin{pmatrix} 1 \\ i \end{pmatrix} u_4,
\label{eq:ansatz}
\end{equation}
with functions $u_1, u_2, u_3, u_4$ and given $r(x) = -\alpha x - i(Q(x) - \mu x)$ $(x\in [0,a])$.
This is motivated by the fact that, with {\it constants} $u_1, u_2, u_3, u_4$,
the above are the general solution of the decoupled equation system when
$m = 0$. The initial conditions translate to
$u_1(0) = u_2(0) = 1$, $u_3(0) = u_4(0) = 0$.
Using this ansatz in the coupled differential equation system (\ref{eq:coupsys}) and then multiplying the first equation (from the left) with the row vectors
$(1, 1)$ and $(1, -1)$, the second equation with the vectors $(1, i)$ and
$(1, -i)$, respectively yields the two separate differential equation systems
\begin{equation}
\left\{\begin{matrix} u_1' = m\,e^{-2r}\,u_4 \\ u_4' = m\,e^{2r}\,u_1 \end{matrix}\right.
\qquad
\left\{\begin{matrix} u_3' = m\,e^{-2r}\,u_2 \\ u_2' = m\,e^{2r}\,u_3 \end{matrix}\right.
\label{eq:sepsys}
\end{equation}
which are in fact the same system, but with different initial values.
Focusing on the system on the left-hand side first, we observe that
\begin{equation*}
|u_1|'(x) \le |u_1'(x)| = m\,e^{2\alpha x}\,|u_4|(x),
\qquad
|u_4|'(x) \le |u_4'(x)| = m\,e^{-2\alpha x}\,|u_1|(x).
\end{equation*}
The solutions exist on $[0,a]$ and, as continuous functions, are bounded.
By an integration by parts and using the initial values,
\begin{align*}
|u_4|(x) &= |u_4(0)| + \int_0^x |u_4|'
\le \int_0^x m\,e^{-2\alpha t}\,|u_1|(t)\,dt
\\
&= -\frac m{2\alpha}\,(e^{-2\alpha x}\,|u_1|(x) - 1) + \frac m{2\alpha} \int_0^x e^{-2\alpha t}\,|u_1|'(t)\,dt
\\
&\le \frac m{2\alpha} - \frac m{2\alpha}\,e^{-2\alpha x}\,|u_1|(x) + \frac {m^2}{2\alpha} \int_0^x e^{-2\alpha t} e^{2\alpha t}\,|u_4|(t)\,d t
\qquad (x\in [0,a]),
\end{align*}
so
\begin{equation*}
\sup_{x\in[0,a]} |u_4(x)| \le \frac m{2\alpha} + \frac {m^2 a}{2\alpha} \sup_{x\in[0,a]} |u_4(x)|
\end{equation*}
and hence
\begin{equation*}
\sup_{x\in[0,a]} |u_4(x)| \le \frac m{2\alpha}\,\frac 1{1 - \frac{m^2 a}{2\alpha}} \le \frac m\alpha
\end{equation*}
for $\alpha > m^2 a$. Consequently,
\begin{equation*}
|u_1(x) - 1| \le \int_0^x |u_1'|
= \int_0^x m\,e^{2\alpha t}\,|u_4(t)|\,dt
\le \frac{m^2}{2\alpha^2}\,(e^{2\alpha x} - 1)
\qquad (x\in[0,a]).
\end{equation*}
Now applying an analogous procedure to the right-hand side system in equation
(\ref{eq:sepsys}), we find
\begin{align*}
|u_2|(x) &= |u_2|(0) + \int_0^x |u_2|'
\le 1 + m \int_0^x e^{-2\alpha t}\,|u_3|(t)\,d t
\\
&= 1 - \frac m{2\alpha}\,(e^{-\alpha x}\,|u_3|(x) - 0) + \frac m{2\alpha}\int_0^x e^{-2\alpha t}\,|u_3|'(t)\,d t
\\
&\le 1 - \frac m{2\alpha}\,e^{-2\alpha x}\,|u_3(x)| + \frac {m^2}{2\alpha}\int_0^x e^{-2\alpha t} e^{2\alpha t}\,|u_2|(t)\,d t,
\end{align*}
so
\begin{equation*}
\sup_{x\in[0,a]} |u_2(x)| \le 1 + \frac{m^2 a}{2\alpha} \sup_{x\in[0,a]} |u_2(x)|
\end{equation*}
and hence
\begin{equation*}
\sup_{x\in[0,a]} |u_2(x)| \le \frac 1{1 - \frac{m^2 a}{2\alpha}} < 2
\end{equation*}
for $\alpha > m^2 a$. Also,
\begin{align*}
|u_3|(x) &= |u_3|(0) + \int_0^x |u_3|'
\le m \int_0^x e^{2\alpha t}\,|u_2|(t)\,d t
\\
&= \frac m{2\alpha}\,(e^{2\alpha x}\,|u_2|(x) - 1) - \frac m{2\alpha} \int_0^x e^{2\alpha t}\,|u_2|'(t)\,d t
\\
&\le \frac m{2\alpha}\,(e^{2\alpha x}\,|u_2|(x) - 1) + \frac{m^2}{2\alpha} \int_0^x e^{2\alpha t} e^{-2\alpha t}\,|u_3|(t)\,d t
\end{align*}
and therefore
\begin{equation*}
\sup_{t\in[0,x]} |u_3(t)| \le \frac m{2\alpha}\,\frac{e^{2\alpha x}}{1 - \frac{m^2 a}{2\alpha}} - \frac m{2\alpha} + \frac{m^2 a}{2\alpha} \sup_{t\in[0,x]} |u_3(t)|,
\end{equation*}
which gives
\begin{equation*}
\sup_{t\in[0,x]} |u_3(t)| \le \frac 1{1 - \frac{m^2 a}{2\alpha}}\left(\frac {m\,e^{2\alpha x}}{2\alpha - m^2 a} - \frac m{2\alpha}\right)
\le \frac{2m}\alpha\,(e^{2\alpha x} - {\textstyle \frac 12})
\le \frac{2m}\alpha\,e^{2\alpha x}
\end{equation*}
for all $x\in [0,a]$ and $\alpha > m^2 a$. Consequently,
\begin{equation*}
|u_2(x) - 1| \le m\int_0^x e^{-2\alpha t}\,|u_3(t)|\,d t
\le \frac{2 m^2 a}\alpha
\qquad (x \in [0,a]).
\end{equation*}
By equation (\ref{eq:ansatz}), we have thus obtained the asymptotics
\begin{align*}
\begin{pmatrix} \phi_0 \\ \phi_2 \end{pmatrix}(x) &= \frac{e^{-\alpha x}}2\,e^{-i(Q(x)-\mu x)} \begin{pmatrix} 1 \\ 1 \end{pmatrix}
(1 + O(\frac{e^{2\alpha x}}{\alpha^2}))
\\ &\qquad
+ \frac{e^{\alpha x}}2\,e^{i(Q(x)-\mu x)} \begin{pmatrix} 1 \\ -1 \end{pmatrix} (1 + O_{\rm unif}({\textstyle\frac 1\alpha})),
\\
\begin{pmatrix} \phi_1 \\ \phi_3 \end{pmatrix}(x) &= \frac{e^{-\alpha x}}2\,e^{-i(Q(x)-\mu x)}\begin{pmatrix} 1 \\ -i \end{pmatrix} O(\frac{e^{2\alpha x}}\alpha)
+ \frac{e^{\alpha x}}2\,e^{i(Q(x)-\mu x)} \begin{pmatrix} 1 \\ i \end{pmatrix} O_{\rm unif}({\textstyle\frac 1\alpha}),
\end{align*}
and equation (\ref{eq:Phiasymp}) follows.
\end{proof}
\noindent
On the basis of the preceding theorem, we now find the asymptotics of the
quasimomentum and of $\Gamma\circ M$ (in Theorem \ref{thm:Masymp}) and of
$\varphi_\pm$ and $\gamma_\pm$ (in Theorem \ref{thm:phipmasymp}).
\begin{thm}\label{thm:Masymp}
Let $\mu\in\mathbb R$, $\alpha > 0$. Then we have the following asymptotics
as $\alpha\rightarrow\infty$ for the monodromy matrix of the periodic
Dirac equation $(\ref{eq:pD})$
\begin{equation*}
e^{-\alpha a}\,M(\mu + i\alpha) = \frac 12\,e^{i(Q(a) - \mu a)} (\mathbb I - \sigma_2) + O({\textstyle \frac 1\alpha}),
\end{equation*}
the discriminant
\begin{equation*}
e^{-\alpha a}\,\mathfrak D(\mu+i \alpha) = e^{i(Q(x) - \mu a)} + O({\textstyle \frac 1\alpha}),
\end{equation*}
the quasimomentum
\begin{equation*}
k(\mu + i \alpha) = \mu + i \alpha - \frac{Q(a)}a + O({\textstyle\frac 1\alpha})
\end{equation*}
and the eigenvectors of $M(\mu + i \alpha)$
\begin{equation*}
v_+(\mu + i \alpha) = \begin{pmatrix} 1 \\ -i \end{pmatrix} + O({\textstyle\frac 1\alpha}),
\qquad
v_-(\mu + i \alpha) = \begin{pmatrix} 1 \\ i \end{pmatrix} + O({\textstyle\frac 1\alpha}).
\end{equation*}
Consequently,
\begin{equation*}
\Gamma(M(\mu + i\alpha)) = 1 + O({\textstyle  \frac 1\alpha}).
\end{equation*}
\end{thm}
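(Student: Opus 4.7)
The plan is to derive the five asymptotics in turn, each feeding into the next, with essentially all of the work having already been done in Theorem \ref{thm:Philim} and its Corollary.

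The first asymptotic is immediate: set $x = a$ in Corollary \ref{cor:Mlim}, noting that $M(\lambda) = \Phi(a,\lambda)$ by definition. Taking the trace of both sides and using $\tr(\mathbb I - \sigma_2) = 2$ gives the discriminant asymptotic $e^{-\alpha a}\mathfrak D(\mu + i\alpha) = e^{i(Q(a) - \mu a)} + O(1/\alpha)$. For the quasimomentum, I would exploit the characteristic equation $\rho^2 - \mathfrak D(\lambda) \rho + 1 = 0$ for the Floquet multiplier: for $\alpha$ large enough $|\mathfrak D|$ is large, so we are in the instability regime and $\rho = (\mathfrak D + \sqrt{\mathfrak D^2 - 4})/2$ is the root outside the unit circle. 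Expanding the square root as $\sqrt{\mathfrak D^2 - 4} = \mathfrak D + O(1/\mathfrak D)$ yields $\rho = \mathfrak D\,(1 + O(e^{-2\alpha a})) = e^{\alpha a + i(Q(a) - \mu a)}(1 + O(1/\alpha))$. Since $\rho = e^{-ik a}$, taking the principal logarithm (the freedom in choosing the branch is eliminated by continuity of $k$ together with the value at $\alpha \to\infty$) and dividing by $-ia$ gives the stated asymptotic $k(\mu + i\alpha) = \mu + i\alpha - Q(a)/a + O(1/\alpha)$.

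For the eigenvectors, I observe that the limit matrix $\tfrac 12 e^{i(Q(a) - \mu a)}(\mathbb I - \sigma_2)$ has two distinct eigenvalues $e^{i(Q(a) - \mu a)}$ and $0$, with corresponding eigenvectors $(1, -i)^T$ (the $-1$-eigenvector of $\sigma_2$) and $(1, i)^T$ (the $+1$-eigenvector of $\sigma_2$). Since the eigenvectors of $e^{-\alpha a} M(\mu + i\alpha)$ coincide with those of $M(\mu + i\alpha)$, analytic perturbation theory (\cite[Theorem II.1.8]{Kato}, as invoked earlier in the paper) shows that suitably normalized eigenvectors converge at the same rate as the matrix, namely $O(1/\alpha)$. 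The identification of $v_+$ with the eigenvector for the nonzero limit eigenvalue is justified by the fact that this eigenvalue corresponds to the large Floquet multiplier $\rho$, hence to $v_+$ by our convention. Plugging these expansions into the definition (\ref{eq:Gammadef}), the determinant $\det(v_+, v_-)$ tends to $\det\bigl(\begin{smallmatrix} 1 & 1 \\ -i & i\end{smallmatrix}\bigr) = 2i$ and both $|v_\pm|$ tend to $\sqrt 2$, yielding $\Gamma(M(\mu + i\alpha)) = |2i|/(\sqrt 2\,\sqrt 2) + O(1/\alpha) = 1 + O(1/\alpha)$.

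I expect the main technical point to be the quasimomentum step: one must justify (i) that the branch of square root and logarithm is the correct one, which follows from continuity of $k$ and the sign convention $\Imag k \ge 0$, and (ii) that the $O(1/\alpha)$ error survives the logarithm, which is routine because the quantity being logarithmed is $1 + O(1/\alpha)$ and bounded away from $0$. The eigenvector step is also slightly subtle in that one has to fix a normalization (e.g.\ first-component equal to $1$, which is valid for large $\alpha$ since the limit eigenvectors both have nonzero first component), but once a normalization is chosen the perturbation bound is immediate. All other steps are mechanical.
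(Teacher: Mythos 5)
Your proposal is correct and follows essentially the same route as the paper: read off the monodromy matrix asymptotic from Corollary \ref{cor:Mlim}, take the trace for the discriminant, solve the characteristic equation for the large Floquet multiplier and exponentiate to get the quasimomentum, and extract eigenvector asymptotics to compute $\Gamma$. One very minor remark: invoking \cite[Theorem II.1.8]{Kato} is not strictly apt here since $e^{-\alpha a}M(\mu+i\alpha)$ is not given as an analytic family in a parameter tending to zero, only as a matrix with an $O(1/\alpha)$ asymptotic; but, as you note, fixing the normalization (first component $=1$) reduces the eigenvector step to solving an explicit $2\times 2$ linear system whose solution is a smooth function of the matrix entries near the limit, which yields the $O(1/\alpha)$ rate without appealing to analytic perturbation theory — and the paper's own proof elides this step with a one-line statement.
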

\begin{proof}
The asymptotics of the monodromy matrix and hence the discriminant follow
directly from Corollary \ref{cor:Mlim}. As $e^{-\alpha a} M(\mu + i\alpha)$
has determinant $e^{-2\alpha a}$,
solving its characteristic equation
shows that the two eigenvalues of this matrix have asymptotics
$e^{i(Q(a) - \mu a)} + O(\frac 1\alpha)$ and
$e^{-2\alpha a}\,(e^{-i(Q(a) - \mu a)} + O(\frac 1\alpha))$,
respectively. Hence the larger eigenvalue of $M(\mu + i \alpha)$ is
\begin{equation*}
e^{\alpha a}\,(e^{i(Q(a) - \mu a)} + O(\frac 1\alpha))
= e^{-i(\mu + i \alpha - \frac{Q(a)}a + O(\frac 1\alpha))a},
\end{equation*}
and we can read off the asymptotics for the quasimomentum. The
asymptotic form of the eigenvectors follows from that of the matrix
$e^{-\alpha a} M(\mu + i \alpha)$ and its eigenvalues.
\end{proof}
\begin{thm}\label{thm:phipmasymp}
Let $\mu \in \mathbb R$, $\alpha > 0$.
For the periodic Dirac equation $(\ref{eq:pD})$, the periodic functions
$\varphi_\pm$ of equation $(\ref{eq:Flop})$ have asymptotics
\begin{equation*}
|\varphi_\pm(x,\mu + i \alpha)| = |\varphi_\pm(0,\mu + i \alpha)|\,(1 + O_{\rm unif}({\textstyle\frac 1\alpha}))
\qquad
 (\alpha \rightarrow \infty)
\end{equation*}
uniformly in $x\in [0,a]$.
Consequently, the functions $\gamma_\pm$ of equation $(\ref{eq:gammadef})$
satisfy
\begin{equation*}
 \gamma_\pm(\mu + i \alpha) = 1 + O({\textstyle \frac 1 \alpha})
\qquad (\alpha \rightarrow \infty).
\end{equation*}
\end{thm}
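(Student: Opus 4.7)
My plan is to express $\varphi_\pm(x, \lambda) = e^{\pm i k(\lambda) x}\,\Phi(x, \lambda)\,v_\pm(\lambda)$ and combine Theorem \ref{thm:Philim} with Theorem \ref{thm:Masymp}. The case of $\varphi_+$ will be direct, whereas $\varphi_-$ will require an additional ODE/periodicity argument because the leading $e^{\alpha x}$-scale part of $\Phi(x,\mu+i\alpha)$ annihilates $v_-$ to leading order.

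For $\varphi_+$, I will use that $v_+(\mu + i\alpha) = \binom{1}{-i} + O(1/\alpha)$, that $(\mathbb I - \sigma_2)\binom{1}{-i} = 2\binom{1}{-i}$ while $(\mathbb I + \sigma_2)\binom{1}{-i} = 0$, and that $e^{i k(\mu+i\alpha) x} = e^{i\mu x - \alpha x - i Q(a) x / a}(1 + O_{\rm unif}({\textstyle \frac 1\alpha}))$ uniformly on $[0, a]$. Substituting these into the expansion of $e^{-\alpha x}\Phi$ from Theorem \ref{thm:Philim} and multiplying by $e^{\alpha x + i k x}$, only the $\mathbb I - \sigma_2$ part of the leading term survives, giving $\varphi_+(x, \mu + i\alpha) = e^{i(Q(x) - Q(a) x/a)}\binom{1}{-i}(1 + O_{\rm unif}({\textstyle\frac 1\alpha})) + O_{\rm unif}({\textstyle\frac 1\alpha})$. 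Hence $|\varphi_+(x, \mu+i\alpha)| = \sqrt 2 + O_{\rm unif}({\textstyle\frac 1\alpha})$ uniformly on $[0, a]$, and comparison with $|\varphi_+(0, \mu+i\alpha)| = |v_+(\mu + i\alpha)| = \sqrt 2 + O({\textstyle\frac 1\alpha})$ yields the claimed relative asymptotic.

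For $\varphi_-$, this direct approach fails: $v_-(\mu + i\alpha) \approx \binom{1}{i}$ lies in $\ker(\mathbb I - \sigma_2)$, so the dominant term of Theorem \ref{thm:Philim} vanishes on $v_-$, and the uniform error $e^{\alpha x}O(1/\alpha)$ overwhelms the true (decaying) value of $\Phi v_-$ for $x$ close to $a$. Instead I will analyse the ODE $\varphi_-' = B_-(x,\lambda)\,\varphi_-$ with $B_-(x,\lambda) = \left(\begin{smallmatrix} -i k(\lambda) & m - q(x) + \lambda \\ m + q(x) - \lambda & -i k(\lambda) \end{smallmatrix}\right)$ (as in the proof of Theorem \ref{thm:gammabound}) directly, in the basis $(\binom{1}{i}, \binom{1}{-i})$. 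Writing $\varphi_-(x) = c(x)\binom{1}{i} + d(x)\binom{1}{-i}$ and using $-i k(\mu+i\alpha) = \alpha + i(Q(a)/a - \mu) + O(1/\alpha)$, the system becomes
\begin{align*}
c' &= i(Q(a)/a - q(x))\,c - i m\,d + O({\textstyle\tfrac 1\alpha})(|c|+|d|), \\
d' &= i m\,c + (2\alpha + i(q(x) + Q(a)/a - 2\mu))\,d + O({\textstyle\tfrac 1\alpha})(|c|+|d|),
\end{align*}
with $c(0) = 1 + O(1/\alpha)$, $d(0) = O(1/\alpha)$ and, by periodicity of $\varphi_-$, also $c(a) = c(0)$, $d(a) = d(0)$. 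Although the $d$-equation has the large unstable coefficient $+2\alpha$, variation of constants combined with the periodicity $d(a) = d(0)$ forces $d(x) = -e^{\Lambda(x)}\int_x^a e^{-\Lambda(t)}[i m\,c(t) + O({\textstyle\frac 1\alpha})(|c|+|d|)]\,dt \cdot (1 + O(e^{-2\alpha a}))$ with $|e^{\Lambda(x)}| = e^{2\alpha x}$, giving $\sup_{[0,a]}|d| \le C\,\sup_{[0,a]}|c|/\alpha$. Substituting back into the $c$-equation, whose homogeneous propagator $e^{i(Q(a) x/a - Q(x))}$ has modulus one, I obtain $|c(x) - c(0)\,e^{i(Q(a) x/a - Q(x))}| = O(\sup|c|/\alpha)$; a bootstrap then yields $\sup|c| = 1 + O(1/\alpha)$ and consequently $\sup|d| = O(1/\alpha)$. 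Therefore $|\varphi_-(x)|^2 = 2|c(x)|^2 + 2|d(x)|^2 = 2 + O_{\rm unif}(1/\alpha)$ on $[0, a]$, while $|\varphi_-(0)|^2 = |v_-(\mu+i\alpha)|^2 = 2 + O(1/\alpha)$, giving the claimed uniform ratio.

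The asymptotic for $\gamma_\pm$ then follows at once by taking $\sup_{x \in [0,a]}$ of the uniform ratio just established. The main obstacle is the analysis for $\varphi_-$: because the Floquet initial condition $v_-$ is aligned with the direction in which the leading part of $\Phi$ vanishes, one must extract subleading behaviour that Theorem \ref{thm:Philim} does not directly resolve, and the periodicity condition is essential to pin down the exponentially unstable mode $d$.
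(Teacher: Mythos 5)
Your proof of the $\varphi_+$ asymptotic is the same as the paper's: substitute $v_+ = \binom{1}{-i} + O(\frac 1\alpha)$ into the expansion from Theorem~\ref{thm:Philim}, use that $v_+$ lies (to leading order) in $\ker(\mathbb I + \sigma_2)$, and cancel the exponential weights. For $\varphi_-$, however, you take a genuinely different route. The paper exploits a structural symmetry of the Dirac equation: $u_-(x,\lambda)$ agrees, up to a constant factor, with $\sigma_3\,\tilde u_+(a-x,\lambda)$, where $\tilde u_+$ is the dominant Floquet solution for the reflected potential $\tilde q(x)=q(a-x)$; thus $|\varphi_-(x,\lambda)|=|\tilde\varphi_+(a-x,\lambda)|$ and the $\varphi_+$ argument applies verbatim. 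You instead analyse the first-order system $\varphi_-'=B_-\varphi_-$ directly in the $\sigma_2$-eigenbasis $\{\binom{1}{i},\binom{1}{-i}\}$, obtaining a stable component $c$ (near the eigenvector $v_-$) and an exponentially unstable component $d$, and you pin down $d$ by combining variation of constants with the $a$-periodicity of $\varphi_-$ to integrate backward from $x=a$. Both arguments are valid. The paper's is shorter and conceptually elegant, but it depends on noticing the reflection symmetry of the periodic Dirac operator; yours is more computational and self-contained, and makes transparent exactly why $\varphi_+$ is easy (no growing mode to control after the change of frame) while $\varphi_-$ needs the periodicity input. One minor imprecision: your variation-of-constants formula $d(x)=-e^{\Lambda(x)}\int_x^a e^{-\Lambda(t)}[\,\cdots]\,dt\,(1+O(e^{-2\alpha a}))$ omits a second term $-e^{\Lambda(x)-\Lambda(a)}\int_0^x e^{-\Lambda(t)}[\,\cdots]\,dt\,(1+O(e^{-2\alpha a}))$ coming from the elimination of $d(0)$; this term is of the same order $O(\sup|f|/\alpha)$ as the one you kept (not exponentially smaller), but it obeys the same estimate, so your bound $\sup_{[0,a]}|d|\le C\sup_{[0,a]}|c|/\alpha$ and the subsequent bootstrap for $c$ are unaffected.
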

\begin{proof}
By Theorem \ref{thm:Masymp} and equation (\ref{eq:Phiasymp}),
observing that
\begin{equation*}
{\textstyle\frac 12}\,(\mathbb I - \sigma_2)\,v_+(\mu + i\alpha) = v_+(\mu + i\alpha) + O({\textstyle\frac 1\alpha}),
\quad
{\textstyle\frac 12}\,(\mathbb I + \sigma_2)\,v_+(\mu + i\alpha) = O({\textstyle\frac 1\alpha}),
\end{equation*}
we obtain
\begin{align*}
\varphi_+(x,\mu+i\alpha) &= e^{i k(\mu+i\alpha) x} \Phi(x,\mu+i\alpha)\,v_+(\mu+i\alpha)
\\
&= e^{i(\mu - \frac{Q(a)}a + O(\frac 1\alpha)) x}\,e^{-\alpha x}\, \Phi(x,\mu+i\alpha)\,v_+(\mu+i\alpha)
\\
&= e^{i(Q(x) - \frac{Q(a)}a\,x + O_{\rm unif}({\frac 1\alpha}))}\,(v_+(\mu+i\alpha) + O({\textstyle\frac 1\alpha}))
\\
&\qquad
+ e^{-2\alpha x} e^{-i(Q(x) + \frac{Q(a)}a\,x - 2 \mu x + O_{\rm unif}({\frac 1\alpha}))}\,O({\textstyle\frac 1\alpha}) + O_{\rm unif}({\textstyle\frac 1\alpha})
\end{align*}
and hence
\begin{align*}
|\varphi_+(x,\mu+i\alpha)| &= (1 + O_{\rm unif}({\textstyle\frac 1\alpha}))\,(v_+(\mu+i\alpha) + O({\textstyle\frac 1\alpha})) + O_{\rm unif}({\textstyle\frac 1\alpha})
\\
&= |v_+(\mu+i\alpha)| + O_{\rm unif}({\textstyle\frac 1\alpha})
= |\varphi_+(0,\mu+i\alpha)|(1 + O_{\rm unif}({\textstyle\frac 1\alpha})).
\end{align*}
Analogous reasoning for $\varphi_-(x,\mu+i\alpha)$ does not work since the
exponentially large factor $e^{-ik(\mu+i\alpha)x}$ ($=\rho(\lambda)^{x/a}$ in equation (\ref{eq:Flop})) leads to uncontrolled
amplification of the $O({\textstyle\frac 1\alpha})$ error term.
However, the Floquet solution $u_-(x,\lambda)$ is equal, up to a constant factor, to
$\sigma_3 \tilde u_+(a-x,\lambda)$ $(x\in\mathbb R)$, where $\tilde u_+$ is the
Floquet solution corresponding to the eigenvalue of modulus greater than 1
of the periodic Dirac equation with potential $\tilde q(x) := q(a-x)$ $(x\in
\mathbb R)$. Therefore the corresponding periodic functions satisfy
(up to a constant factor)
$|\varphi_-(x,\lambda)| = |\tilde \varphi_+(a-x,\lambda)|$, so
we obtain the asymptotics of $|\varphi_-|$ by applying the above reasoning to
$|\tilde \varphi_+|$.
\end{proof}

\section*{Acknowledgements}
Ghada Shuker Jameel expresses her gratitude to the University of Mosul (Iraq) for
the financial support towards her PhD studies in the UK.

\end{document}